\definecolor{webgreen}{rgb}{0,.5,0}
\definecolor{webbrown}{rgb}{.6,0,0}
\begin{document}

	\theoremstyle{plain}
	\newtheorem{theorem}{Theorem}
	\newtheorem{lemma}[theorem]{Lemma}
	\newtheorem{corollary}[theorem]{Corollary}
	\theoremstyle{definition}
	\newtheorem{definition}[theorem]{Definition}
	\newtheorem{example}[theorem]{Example}
	\newtheorem{remark}[theorem]{Remark}

\begin{center}
	\vskip 1cm{\LARGE\bf
		Chebyshev-Fibonacci polynomial relations using generating functions\\
		\vskip .1in } \vskip 0.5cm \large
Robert Frontczak\footnote{Statements and conclusions made in this article by R. Frontczak are entirely those of the author. They do not necessarily reflect the views of LBBW.}\\
Landesbank Baden-W{\"u}rttemberg,\\
Stuttgart, Germany\\
\href{mailto:robert.frontczak@lbbw.de}{\tt robert.frontczak@web.de}\\
\vskip .2 in

		Taras Goy\\
	Faculty of Mathematics and Computer Science,\\
	Vasyl Stefanyk Precarpathian National University, \\
	Ivano-Frankivsk, Ukraine\\
	\href{mailto:taras.goy@pnu.edu.ua}{\tt taras.goy@pnu.edu.ua} \\
	\vskip .2 in
\end{center}

\vskip .2 in

\begin{abstract}
The main object of the paper is to reveal connections between Chebyshev polynomials of the first and second kinds and Fibonacci polynomials introduced by Catalan. This is achieved by relating the respective (ordinary and exponential) generating functions to each other. As a consequence, we also establish new combinatorial identities for balancing polynomials and Fibonacci (Lucas) numbers.
\end{abstract}

\section*{1. Introduction}

For any integer $n\geq0$, the Chebyshev polynomials $\{T_n(x)\}_{n\geq0}$ of the first kind are de\-fined by the second-order recurrence relation \cite{Mason}
\begin{equation}\label{T-def}
T_0(x)=1,\quad T_1(x)=x,\quad T_{n+1}(x)=2xT_n(x)-T_{n-1}(x),
\end{equation}
while the Chebyshev polynomials $\{U_n(x)\}_{n\geq0}$ of the second kind are defined by
\begin{equation}\label{U-def}
U_0(x)=1,\quad U_1(x)=2x,\quad U_{n+1}(x)=2xU_n(x)-U_{n-1}(x).
\end{equation}

If we denote $\alpha(x)=x+\sqrt{x^2-1}$ and  $\beta(x)=x-\sqrt{x^2-1}$, then we have   
\begin{gather*}
T_n(x)=\frac{\alpha^n(x)+\beta^n(x)}{2}=\sum_{k=0}^{\lfloor{n}/{2}\rfloor}{n\choose 2k}(x^2-1)^kx^{n-2k},\\
U_n(x)=\frac{\alpha^{n+1}(x)-\beta^{n+1}(x)}{2\sqrt{x^2-1}}=\sum_{k=0}^{\lfloor{n}/{2}\rfloor}{n+1\choose 2k+1}(x^2-1)^kx^{n-2k}.
\end{gather*}

Fibonacci polynomials are polynomials that can be defined by Fibonacci-li\-ke recursion relations. They were studied in 1883 by E. Catalan and E. Jacobsthal. For example, Catalan studied the polynomials $F_n(x)$ defined by the recurrence
\begin{equation*}
F_n(x) = xF_{n-1}(x)+F_{n-2}(x),\quad n\geq2, 
\end{equation*}
with $F_0(x)=0$ and $F_1(x)=1$. A non-recursive expression for $F_n(x)$ is
\begin{equation*}
F_{n}(x)=\frac{\rho^n(x)-\sigma^n(x)}{\sqrt{x^2+4}}=
\sum_{k=0}^{\lfloor\frac{n-1}{2}\rfloor}{n-k-1\choose k}x^{n-2k-1},\quad n\geq0,
\end{equation*}
where $\rho(x)=\frac{x+ \sqrt{x^2+4}}{2}$ and $\sigma(x)=\frac{x-\sqrt{x^2+4}}{2}$.

Chebyshev and Fibonacci polynomials play an important role in applied mathematics. They possess many interesting and unique properties. 
Excellent sources are the textbooks \cite{Koshy,Mason,Rivlin}, among others. 

The most recent literature on Chebyshev and Fibonacci polynomials encompasses the following articles. Kilic~et~al.~\cite{Kilic} computed various types of power sums for Chebyshev polynomials and deduced new connections between Chebyshev polynomials and Fibonacci numbers. 
Kim et al. \cite{Kim}~recently derived new expressions for sums of finite products of Chebychev and Fibonacci polynomials. In \cite{Abd}, Abd-Elhammed~et~al. established new connection formulas between Fibonacci polynomials and Chebyshev polynomials. These formulas are expressed in terms of certain values of hypergeometric functions. Li and Wenpeng \cite{Li-Wenpeng}, using the definitions and properties of Chebyshev polynomials, studied the power sum problems involving Fibonacci polynomials and Lucas polynomials and obtained some interesting divisibility properties. Li \cite{Li}~studied relationships between Chebyshev polynomials, Fibonacci polynomials, and their derivatives, and got the formula for derivatives of Chebyshev polynomials being represented by Chebyshev polynomials and Fibonacci polynomials. Finally, we mention the paper by Fl\'{o}rez, McAnally and Mukherjee \cite{Flo}~where many identities for generalized Fibonacci polynomials 
are derived. 

The purpose of this paper is to obtain some identities involving Chebyshev polynomials of the first and second kinds and Fibonacci polynomials. We achieve this in a conventional manner by relating the respective (ordinary and exponential) generating functions to each other, resulting in a range of interesting functional equations. Our approach is in the spirit of \cite{Frontczak1,Frontczak2,FR-GT}. Also, using simple connections between Chebyshev polynomials and balancing polynomials we will be able to incorporate 
the later polynomial class into our analysis. Some of the results of this paper were announced without proofs in \cite{FR-GT-Vorokhta}. 

\medskip

\section*{2. Some generating functions}

This section contains the generating functions that will be used later in this article. We state the results without proofs 
as they can be derived without much efforts. We recommend the article by Mez\H{o} \cite{Mezo} for a comprehensive study of generating functions for second-order sequences. 

From \eqref{T-def} and \eqref{U-def} it can be shown that the ordinary generating functions 
for Chebyshev polynomials $T_n(x)$, $U_n(x)$ and their odd and even indexed companions are given by 
\begin{gather}
t(z,x)=\sum_{n\geq0}T_{n}(x)z^{n}=\frac{1-xz}{1-2xz+z^2}, \label{gf-t}\\
t_1(z,x)=\sum_{n\geq0}T_{2n+1}(x)z^{n}=\frac{x(1-z)}{1-(4x^2-2)z+z^2}, \label{gf-t1}\\
t_2(z,x)=\sum_{n\geq0}T_{2n}(x)z^{n}=\frac{1-(2x^2-1)z}{1-(4x^2-2)z+z^2}, \label{gf-t2}
\end{gather}
and 
\begin{gather}
u(z,x)=\sum_{n\geq0}U_{n}(x)z^{n}=\frac{1}{1-2xz+z^2}, \label{gf-u}\\
u_1(z,x)=\sum_{n\geq0}U_{2n+1}(x)z^{n}=\frac{2x}{1-(4x^2-2)z+z^2}, \label{gf-u1}\\
u_2(z,x)=\sum_{n\geq0}U_{2n}(x)z^{n}=\frac{1+z}{1-(4x^2-2)z+z^2}. \label{gf-u2}
\end{gather}

In addition, the corresponding exponential generating functions for these polynomial sequences are given by 
\begin{gather}
\tau(z,x)=\sum_{n\geq0}T_n(x)\frac{z^n}{n!}=e^{xz}\cosh\bigl(\sqrt{x^2-1}\,z\bigr), \label{egf-t}\\
\tau_1(z,x)=\sum_{n\geq0}T_{2n+1}(x)\frac{z^n}{n!}\nonumber\\
=e^{(2x^2-1)\,z}\Bigl(x\cosh\bigl(2x\sqrt{x^2-1}\,z\bigr)+\sqrt{x^2-1}\sinh\bigl(2x\sqrt{x^2-1}\,z\bigr)\Bigr), \label{egf-t1}\\
\tau_2(z,x)=\sum_{n\geq0}T_{2n}(x)\frac{z^n}{n!}=e^{(2x^2-1)z}\cosh\bigl(2x\sqrt{x^2-1}\,z\bigr), \label{egf-t2}
\end{gather}
and
\begin{gather}
\omega(z,x)=\sum_{n\geq0}U_n(x)\frac{z^n}{n!}\notag\\
=\frac{e^{xz}}{\sqrt{x^2-1}}\Bigl(x\sinh\bigl(\sqrt{x^2-1}\,z\bigr)+\sqrt{x^2-1}\cosh\bigl(\sqrt{x^2-1}\,z\bigr)\Bigr),\label{egf-u}\\[2pt]
\omega_1(z,x)=\sum_{n\geq0}U_{2n+1}(x)\frac{z^n}{n!}\nonumber\\
=\frac{e^{(2x^2-1)z}}{\sqrt{x^2-1}}\Bigl((2x^2-1)\sinh\bigl(2x\sqrt{x^2-1}\,z\bigr)+2x\sqrt{x^2-1}\cosh\bigl(2x\sqrt{x^2-1}\,z\bigr)\Bigr),\label{egf-u1}
\end{gather}
\begin{gather}
\omega_2(z,x)=\sum_{n\geq0}U_{2n}(x)\frac{z^n}{n!}\notag\\
=\frac{e^{(2x^2-1)z}}{\sqrt{x^2-1}}\Bigl(x\sinh\bigl(2x\sqrt{x^2-1}\,z\bigr)+\sqrt{x^2-1}\cosh\bigl(2x\sqrt{x^2-1}\,z\bigr)\Bigr).\notag 
\end{gather}

Fibonacci polynomials $F_n(x)$, $F_{2n+1}(x)$ and $F_{2n}(x)$ have the following ordinary generating functions
\begin{gather}
f(z,x) =\sum_{n\geq0}F_n(x)z^n=\frac{z}{1-xz-z^2}, \label{gf-f}\\[2pt]
f_1(z,x) =\sum_{n\geq0}F_{2n+1}(x)z^{n}=\frac{1-z}{1-(x^2+2)z+z^2}, \label{gf-f1}\\[2pt]
f_2(z,x) =\sum_{n\geq0}F_{2n}(x)z^{n}=\frac{xz}{1-(x^2+2)z+z^2}, \label{gf-f2}
\end{gather}
while the exponential generating functions are 
\begin{gather}
\label{egf-F}
\phi(z,x)=\sum_{n\geq0}F_n(x)\frac{z^n}{n!} = \frac{2e^{\frac{xz}{2}}}{\sqrt{x^2+4}}\sinh\Bigl(\frac{\sqrt{x^2+4}}{2}\,z\Bigr),\\[2pt]
\phi_1(z,x)=\sum_{n\geq0}F_{2n+1}(x)\frac{z^{n}}{n!}\notag\\
=\frac{e^{\frac{x^2+2}{2}z}}{\sqrt{x^2+4}}\left(x\sinh\Bigl(\frac{x\sqrt{x^2+4}}{2}\,z\Bigr)+\sqrt{x^2+4}\cosh\Bigl(\frac{x\sqrt{x^2+4}}{2}\,z\Bigr)\right),\label{egf-F1}\\[2pt]
\phi_2(z,x)=\sum_{n\geq0}F_{2n}(x)\frac{z^{n}}{n!}
=\frac{2e^{\frac{x^2+2}{2}z}}{\sqrt{x^2+4}}\sinh\Bigl(\frac{x\sqrt{x^2+4}}{2}\,z\Bigr).\label{egf-F2}
\end{gather}

\medskip

\section*{3.~Chebyshev-Fibonacci polynomial identities using \break ordinary generating functions}

In what follows, we will use the standard convention that $\sum_{k=0}^n a_k=0$ for $n<0$.
\begin{theorem} \label{Theo1} 
	For $n\geq1$, the following polynomial identities hold: 
	\begin{gather}
	F_n(x)= T_{n-1}(x)-\sum_{k=1}^{n-2}\bigl(xT_{n-1-k}(x)-2T_{n-2-k}(x)\bigr)F_k(x), \label{f-t}\\
	F_{n}(x)+xF_{n-1}(x)= U_{n-1}(x)-\sum_{k=1}^{n-2}\bigl(xU_{n-1-k}(x)-2U_{n-2-k}(x)\bigr)F_{k}(x). \label{f-u}
	\end{gather}
\end{theorem}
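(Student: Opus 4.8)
The natural strategy is to compare ordinary generating functions. The key observation is that the Fibonacci generating function $f(z,x) = z/(1-xz-z^2)$ and the Chebyshev generating functions in \eqref{gf-t} and \eqref{gf-u} share a closely related quadratic denominator. I would start by rewriting the denominator $1-xz-z^2$ in terms of $1-2xz+z^2$, the Chebyshev denominator. Concretely, $1-xz-z^2 = (1-2xz+z^2) + (xz - 2z^2) = (1-2xz+z^2) - z(2z - x)$. Hence, dividing $z$ by $1-xz-z^2$ and multiplying both sides by $(1-2xz+z^2)$, I get the functional equation
\[
f(z,x)\,\bigl(1-2xz+z^2\bigr) = z + z(x - 2z)\,f(z,x),
\]
or after rearranging, $f(z,x)(1 - xz + 2z^2) = z$. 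Wait — more useful is to express this as $f(z,x) = z\cdot u(z,x)^{-1}\cdot\bigl(\text{something}\bigr)$; the cleaner route is to write
\[
f(z,x) = \frac{z}{1-2xz+z^2 - z(x-2z)}
\]
and treat $z(x-2z)/(1-2xz+z^2)$ as a geometric-series parameter. But the slickest form is simply the polynomial identity in generating functions: from $1-xz-z^2 = (1-2xz+z^2) - z(2z-x)$ we obtain
\[
\bigl(1-2xz+z^2\bigr) f(z,x) = z + z(2z-x) f(z,x).
\]

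For identity \eqref{f-t}, I would multiply this last equation appropriately to bring in $t(z,x)$. Since $t(z,x) = (1-xz)\,u(z,x)$ and $u(z,x) = 1/(1-2xz+z^2)$, multiplying the functional equation by $u(z,x)$ gives $f(z,x) = z\,u(z,x) + z(2z-x)f(z,x)u(z,x)$. The goal is \eqref{f-t}, which features $T_{n-1}(x)$ and the kernel $xT_{n-1-k}(x) - 2T_{n-2-k}(x)$; this suggests I actually want the factor $t(z,x)$, not $u(z,x)$. So I would instead use $1-xz-z^2 = (1-xz) - z^2 \cdot(\text{adjust})$; the precise manipulation is to write
\[
f(z,x) = \frac{z}{1-xz-z^2}, \qquad t(z,x) = \frac{1-xz}{1-2xz+z^2},
\]
and verify directly that
\[
f(z,x)\cdot\frac{1-2xz+z^2}{1-xz} \;=\; \frac{z}{(1-xz-z^2)}\cdot\frac{1-2xz+z^2}{1-xz},
\]
then simplify the right-hand side to $z + z(\text{linear in }z)\cdot f(z,x)/t(z,x)$-type expression. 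Rather than guess, I would reverse-engineer: the claimed identity \eqref{f-t} states that $f(z,x) = z\,t(z,x) - z\,(x - 2z)\,t(z,x)\,f(z,x)$ after reading off coefficients (the $T_{n-1}(x)$ term is the coefficient of $z^n$ in $z\,t(z,x)$, and the convolution sum with kernel $xT_{j}(x)-2T_{j-1}(x)$ is the coefficient of $z^n$ in $z(x-2z)\,t(z,x)\,f(z,x)$). So the target functional equation is
\[
f(z,x) = z\,t(z,x)\bigl(1 - (x-2z)f(z,x)\bigr),
\]
and I would prove this by clearing denominators: substitute the closed forms, multiply through by $(1-xz-z^2)(1-2xz+z^2)$, and check the resulting polynomial identity in $z$ (it will be low degree — at most degree $4$ or so), which is a routine verification.

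For identity \eqref{f-u}, the left-hand side is $F_n(x) + x F_{n-1}(x)$, whose generating function is $(1 + xz)f(z,x)$; equivalently, since $F_{n+1}(x) = xF_n(x) + F_{n-1}(x)$, one checks $\sum (F_n + xF_{n-1})z^n = (z + xz^2 + z)/(1-xz-z^2)$ — I should be careful with index shifts here, but the upshot is a generating function of the form $\frac{z(1+\cdot)}{1-xz-z^2}$. The same clearing-denominators approach applies: the target functional equation is
\[
(1+xz)f(z,x) \;=\; z\,u(z,x)\bigl(1 - (x-2z)f(z,x)\bigr)
\]
(or a close variant), since the $U_{n-1}(x)$ term corresponds to $z\,u(z,x)$ and the kernel $xU_{j}(x) - 2U_{j-1}(x)$ corresponds to multiplication by $z(x-2z)$. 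Again this reduces to verifying a polynomial identity in $z$ after multiplying by $(1-xz-z^2)(1-2xz+z^2)$.

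**Main obstacle.** The only real subtlety is bookkeeping: getting the index shifts exactly right so that reading off the coefficient of $z^n$ produces precisely the stated sums with limits $k=1$ to $n-2$ and kernels $xT_{n-1-k}(x)-2T_{n-2-k}(x)$ (respectively with $U$). In particular I must track how the factor $z(x-2z) = xz - 2z^2$ splits the convolution into two shifted pieces, and confirm the boundary terms ($k=0$, $k=n-1$, $k=n$) either vanish or recombine into the lone $T_{n-1}(x)$ (resp. $U_{n-1}(x)$) term, using the convention $\sum_{k=0}^{n} = 0$ for $n<0$ and the initial values $F_0(x)=0$, $F_1(x)=1$. Once the functional equation is pinned down, the verification itself is a short polynomial computation and presents no difficulty.
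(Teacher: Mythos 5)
Your overall strategy---rewrite $1-xz-z^2=(1-2xz+z^2)-z(2z-x)$ to get a functional equation among $f$, $t$, $u$, then compare coefficients---is exactly the paper's. Indeed, the equation you derive along the way, $f(z,x)=z\,u(z,x)+z(2z-x)f(z,x)u(z,x)$, is precisely the paper's functional equation for \eqref{f-u}, and multiplying it by $(1-xz)$ gives the one for \eqref{f-t}, namely $(1-xz)f(z,x)=z\,t(z,x)+z(2z-x)t(z,x)f(z,x)$.

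However, both ``target'' functional equations you then write down by reverse-engineering are false, and the ``routine verification by clearing denominators'' that you promise would fail on them. Concretely, $f=zt\bigl(1-(x-2z)f\bigr)$ rearranges to $f-zt=(2z^2-xz)tf$, whereas the true identity is $(1-xz)f-zt=(2z^2-xz)tf$; your version is off by exactly $xzf$ (substituting the closed forms into yours forces $xzf=0$). Likewise $(1+xz)f=zu\bigl(1-(x-2z)f\bigr)$ is off by $xzf$ from the correct $f-zu=(2z^2-xz)uf$. The source of the error is the boundary term you deferred: the coefficient of $z^n$ in $xz\,t(z,x)f(z,x)$ includes the $k=n-1$ term $xT_0(x)F_{n-1}(x)=xF_{n-1}(x)$, which is not part of the sum $\sum_{k=1}^{n-2}$ in \eqref{f-t} and must instead be absorbed as the factor $(1-xz)$ on the left (and analogously with $U_0(x)=1$, it is this term---not a factor $(1+xz)$ of $f$---that produces the $xF_{n-1}(x)$ on the left of \eqref{f-u}). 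Since you already have the correct equation $f=zu+z(2z-x)fu$ in hand, the fix is immediate, but as written the final verification step targets identities that do not hold.
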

\begin{proof} 
	To prove formula \eqref{f-t}, observe that by \eqref{gf-t} and \eqref{gf-f}, we obtain, respectively,
	\begin{gather*}
	1-xz = \frac{1-xz+t(z,x)(xz-z^2)}{t(z,x)},\qquad 1-xz = \frac{z+z^2f(z,x)}{f(z,x)},
	\end{gather*}
	and thus
	\begin{equation*}
	(1-xz)f(z,x)-zt(z,x) = (2z^2-xz)t(z,x)f(z,x).
	\end{equation*}
	Expanding both sides of the last equation as a power series in $z$ and using the Cauchy product of two power series, we then obtain
	\begin{gather*}
	\sum_{n\geq0}F_n(x)z^n-x\sum_{n\geq0}F_n(x)z^{n+1}-\sum_{n\geq0}T_n(x)z^{n+1}\\
	=2\sum_{n\geq0}\sum_{k=0}^{n}T_{n-k}(x)F_k(x)z^{n+2}-x\sum_{n\geq0}\sum_{k=0}^nT_{n-k}(x)F_k(x)z^{n+1}
	\end{gather*}
	or, equivalently,
	\begin{gather*}
	F_0(x)+F_1(x)z+\sum_{n\geq2}F_n(x)z^n-x\sum_{n\geq2}F_{n-1}(x)z^{n}-T_0(x)z-\sum_{n\geq2}T_{n-1}(x)z^{n}\\
	=2\sum_{n\geq2}\sum_{k=0}^{n-2}T_{n-2-k}(x)F_k(x)z^{n}-x\sum_{n\geq2}\sum_{k=0}^{n-1}T_{n-1-k}(x)F_k(x)z^{n},\\
	\sum_{n\geq2}\bigl(F_n(x)-xF_{n-1}(x)-T_{n-1}(x)\bigr)z^{n}\\
	=\sum_{n\geq2}\Bigl(2\sum_{k=0}^{n-2}T_{n-2-k}(x)F_k(x)-x\sum_{k=0}^{n-1}T_{n-1-k}(x)F_k(x)\Bigr)z^{n}.
	\end{gather*}
	Comparing the coefficients on both sides, we have
	\begin{gather*}
	F_n(x)-xF_{n-1}(x)-T_{n-1}(x)
	= \sum_{k=0}^{n-2}\bigl(2T_{n-2-k}(x)-xT_{n-1-k}(x)\bigr)F_k(x)-xT_0(x)F_{n-1}(x),
	\end{gather*}
	as desired. The proof of \eqref{f-u} is very similar. From \eqref{gf-u} and \eqref{gf-f} the following functional equation follows:
	\begin{equation*}
	\frac{1}{u(z,x)} = \frac{z}{f(z,x)} + z(2z-x)
	\end{equation*}
	or, equivalently,
	\begin{equation*}
	f(z,x) - z u(z,x) = 2 z^2 f(z,x)u(z,x) - x z f(z,x) u(z,x).
	\end{equation*}
	The remainder of the proof is the same as above.
\end{proof}

In a similar manner, we use the generating functions  \eqref{gf-t1}, \eqref{gf-u1}, \eqref{gf-f1}, and \eqref{gf-t2}, \eqref{gf-u2}, \eqref{gf-f2}, respectively, to prove four additional relations between odd (even) indexed Chebyshev and Fibonacci polynomials. These relations are contained in the next theorem, those proofs we leave to the reader.
\begin{theorem}\label{Theo2}
	The following identities hold for $n\geq1$
	\begin{gather*} 
	x\bigl(F_{2n+1}(x)-F_{2n-1}(x)\bigr) = T_{2n+1}(x)-T_{2n-1}(x)-(3x^2-4)\sum_{k=0}^{n-1}F_{2k+1}(x)T_{2(n-k)-1}(x), \\
	2xF_{2n+1}(x) = U_{2n+1}(x)-U_{2n-1}(x)-(3x^2-4)\sum_{k=0}^{n-1}F_{2k+1}(x)U_{2(n-k)-1}(x).
	\end{gather*}
	The even indexed counterparts are given by  
	\begin{gather*}
	F_{2n}(x)-(2x^2-1)F_{2n-2}(x)	= xT_{2n-2}(x)-(3x^2-4)\sum_{k=1}^{n-1}F_{2k}(x)T_{2(n-k-1)}(x),\\
	F_{2n}(x)+F_{2n-2}(x) = xU_{2n-2}(x)-(3x^2-4)\sum_{k=1}^{n-1}F_{2k}(x)U_{2(n-k-1)}(x).
	\end{gather*}
\end{theorem}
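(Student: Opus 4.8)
The plan is to run the argument of Theorem~\ref{Theo1} once more, this time pairing the odd-indexed (resp. even-indexed) Chebyshev generating functions \eqref{gf-t1}, \eqref{gf-u1} (resp. \eqref{gf-t2}, \eqref{gf-u2}) with the matching Fibonacci generating functions \eqref{gf-f1}, \eqref{gf-f2}. The observation that makes everything work is that the two denominators occurring here, namely $1-(4x^2-2)z+z^2$ on the Chebyshev side and $1-(x^2+2)z+z^2$ on the Fibonacci side, differ by exactly $(3x^2-4)z$; this is precisely where the factor $3x^2-4$ in all four identities comes from.

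For the first identity I would note that \eqref{gf-t1} and \eqref{gf-f1} give $t_1(z,x)=\frac{x(1-z)}{1-(4x^2-2)z+z^2}$ and $x\,f_1(z,x)=\frac{x(1-z)}{1-(x^2+2)z+z^2}$, whence
\[
\frac{1}{x\,f_1(z,x)}-\frac{1}{t_1(z,x)}=\frac{(3x^2-4)z}{x(1-z)}.
\]
Clearing denominators turns this into the functional equation
\[
(1-z)\bigl(t_1(z,x)-x\,f_1(z,x)\bigr)=(3x^2-4)\,z\,f_1(z,x)\,t_1(z,x),
\]
and comparing the coefficient of $z^n$ on both sides (using the Cauchy product on the right and accounting for the prefactors $1-z$ and $z$ by index shifts) yields
\[
(T_{2n+1}(x)-T_{2n-1}(x))-x(F_{2n+1}(x)-F_{2n-1}(x))=(3x^2-4)\sum_{k=0}^{n-1}F_{2k+1}(x)\,T_{2(n-k)-1}(x),
\]
which is the first claim after rearranging. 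The $U$-version is obtained the same way from $\frac{1-z}{f_1(z,x)}-\frac{2x}{u_1(z,x)}=(3x^2-4)z$, i.e. from $(1-z)u_1(z,x)-2x\,f_1(z,x)=(3x^2-4)\,z\,f_1(z,x)\,u_1(z,x)$.

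For the even-indexed pair the same device works with \eqref{gf-t2}, \eqref{gf-u2}, \eqref{gf-f2}: from $\frac{xz}{f_2(z,x)}-\frac{1-(2x^2-1)z}{t_2(z,x)}=(3x^2-4)z$ one gets
\[
xz\,t_2(z,x)-\bigl(1-(2x^2-1)z\bigr)f_2(z,x)=(3x^2-4)\,z\,f_2(z,x)\,t_2(z,x),
\]
and reading off the coefficient of $z^n$ gives $xT_{2n-2}(x)-F_{2n}(x)+(2x^2-1)F_{2n-2}(x)=(3x^2-4)\sum_{k=0}^{n-1}F_{2k}(x)T_{2(n-1-k)}(x)$; since $F_0(x)=0$, the $k=0$ term vanishes and the sum effectively starts at $k=1$, giving the third claim. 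The fourth follows identically from $\frac{xz}{f_2(z,x)}-\frac{1+z}{u_2(z,x)}=(3x^2-4)z$.

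I do not expect a genuine obstacle: each identity reduces to a one-line manipulation of rational generating functions followed by a Cauchy-product coefficient comparison, exactly as in Theorem~\ref{Theo1}. The only points needing care are spotting the right linear combination of reciprocal generating functions that isolates the common factor $(3x^2-4)z$, and keeping the index shifts straight when passing from the functional equation to the coefficient identity --- in particular, using $F_0(x)=0$ to start the even-indexed sums at $k=1$ and checking the base case $n=1$ against the empty-sum convention.
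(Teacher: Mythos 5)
Your proposal is correct and is exactly the argument the paper intends: the authors leave Theorem~\ref{Theo2} to the reader, indicating only that it follows from \eqref{gf-t1}, \eqref{gf-u1}, \eqref{gf-f1} and \eqref{gf-t2}, \eqref{gf-u2}, \eqref{gf-f2} in the same manner as Theorem~\ref{Theo1}, which is precisely what you carry out. Your functional equations and the coefficient comparisons (including the use of $F_0(x)=0$ to start the even-indexed sums at $k=1$) all check out.
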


Next, we present a range of Chebyshev-Fibonacci identities with mixed indices.
\begin{theorem} 
	For $n\geq1$, we have \label{Theo3}
	\begin{gather}
	xF_{n}(x)+(4x^3-x^2-3x)F_{n-1}(x)\nonumber\\
	=T_{2n-1}(x)-\sum_{k=1}^{n-2}\bigl((4x^2-x-2)T_{2(n-k)-1}(x)-2T_{2(n-k)-3}(x)\bigr)F_{k}(x),\nonumber\\
	2xF_{n}(x)+(8x^3-2x^2-4x)F_{n-1}(x)\nonumber\\
	= U_{2n-1}(x)-\sum_{k=1}^{n-2}\bigl((4x^2-x-2)U_{2(n-k)-1}(x)-2U_{2(n-k)-3}(x)\bigr)F_{k}(x),\nonumber\\
	F_{n}(x)+(2x^2-x-1)F_{n-1}(x)\nonumber\\
	=T_{2n-2}(x)-\sum_{k=1}^{n-2}\bigl((4x^2-x-2)T_{2(n-k-1)}(x)-2T_{2(n-k-2)}(x)\bigr)F_{k}(x),\nonumber\\
	F_{n}(x)+(4x^2-x-1)F_{n-1}(x)\nonumber\\
	=U_{2n-2}(x)-\sum_{k=1}^{n-2}\bigl((4x^2-x-2)U_{2(n-k-1)}(x)-2U_{2(n-k-2)}(x)\bigr)F_{k}(x),\nonumber\\
			F_{2n+1}(x)-(2x^2-1)F_{2n-1}(x)\nonumber\\
	=T_{2n}(x)-T_{2n-2}(x)-(3x^2-4)\sum_{k=0}^{n-1}T_{2(n-k-1)}(x)F_{2k+1}(x),\nonumber\\
		F_{2n+1}(x)+F_{2n-1}(x)\notag\\
	=U_{2n}(x)-U_{2n-2}(x)-(3x^2-4)\sum_{k=0}^{n-1}U_{2(n-k-1)}(x)F_{2k+1}(x),\nonumber 
	\end{gather}
	\begin{gather}
	x^2F_{2n-1}(x)=xT_{2n-1}(x)-(3x^2-4)\sum_{k=1}^{n-1}T_{2(n-k)-1}(x)F_{2k}(x),\nonumber\\ \label{f2-u1}
	2xF_{2n}(x)= xU_{2n-1}(x)-(3x^2-4)\sum_{k=1}^{n-1}U_{2(n-k)-1}(x)F_{2k}(x),\\
	F_{2n+1}(x)-xF_{2n-1} = T_n(x)-T_{n-1}(x)+(x^2-2x+2)\sum_{k=0}^{n-1}T_{n-1-k}(x)F_{2k+1}(x),\nonumber\\
	F_{2n+1}(x) = U_n(x)-U_{n-1}(x)+(x^2-2x+2)\sum_{k=0}^{n-1}U_{n-1-k}(x)F_{2k+1}(x),\nonumber\\
	F_{2n}(x)-xF_{2n-2} =xT_{n-1}(x)+(x^2-2x+2)\sum_{k=1}^{n-1}T_{n-1-k}(x)F_{2k}(x),\nonumber\\
	F_{2n}(x) = xU_{n-1}(x)+(x^2-2x+2)\sum_{k=1}^{n-1}U_{n-1-k}(x)F_{2k}(x).\nonumber
	\end{gather}
\end{theorem}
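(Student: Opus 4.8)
The plan is to obtain all of these identities from the single device already used in the proof of Theorem~\ref{Theo1}. For each identity one selects a Fibonacci ordinary generating function from $\{f,f_1,f_2\}$ (see \eqref{gf-f}--\eqref{gf-f2}) and a Chebyshev ordinary generating function from $\{t,t_1,t_2,u,u_1,u_2\}$ (see \eqref{gf-t}--\eqref{gf-u2}), the choice being forced by the indices that occur. Thus the first two identities pair $f$ with $t_1$ and with $u_1$; the next two pair $f$ with $t_2$ and with $u_2$; the fifth and sixth pair $f_1$ with $t_2$ and with $u_2$; the identity whose left-hand side is $x^2F_{2n-1}(x)$ together with its companion \eqref{f2-u1} pair $f_2$ with $t_1$ and with $u_1$; and the last four pair $f_1$ or $f_2$ with $t$ or $u$.

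With a pair fixed, write each of the two generating functions as $g=P(z)/Q(z)$, where $P$ has degree at most $1$ and $Q$ has degree $2$ in $z$ (with coefficients polynomial in $x$), so that $P(z)/g=Q(z)$. Writing $g_{\mathrm F}=P_{\mathrm F}/Q_{\mathrm F}$ for the Fibonacci choice and $g_{\mathrm C}=P_{\mathrm C}/Q_{\mathrm C}$ for the Chebyshev one, the polynomial $S(z):=Q_{\mathrm C}(z)-Q_{\mathrm F}(z)$ has zero constant term and degree at most $2$, and clearing denominators in $P_{\mathrm C}(z)/g_{\mathrm C}=P_{\mathrm F}(z)/g_{\mathrm F}+S(z)$ produces the functional equation
\[
P_{\mathrm C}(z)\,g_{\mathrm F}(z,x)=P_{\mathrm F}(z)\,g_{\mathrm C}(z,x)+S(z)\,g_{\mathrm C}(z,x)\,g_{\mathrm F}(z,x).
\]
The shape of $S$ dictates the shape of the conclusion: when $g_{\mathrm F}=f$, whose denominator $1-xz-z^2$ carries a $-z^2$, the polynomial $S$ has a genuine quadratic part $+2z^2$, and this is the source of the ``second'' summands $-2T_{\bullet}(x)$ or $-2U_{\bullet}(x)$ in the first four identities; when $g_{\mathrm F}\in\{f_1,f_2\}$ the quadratic parts of the two denominators cancel, $S(z)$ is linear, and the sums then contain only a single term, carrying the scalar factor $3x^2-4$ or $x^2-2x+2$.

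Next one expands both sides of the functional equation as formal power series in $z$. The terms $P_{\mathrm C}(z)g_{\mathrm F}$ and $P_{\mathrm F}(z)g_{\mathrm C}$ are merely shifted, rescaled copies of the single sequences, whereas $S(z)g_{\mathrm C}g_{\mathrm F}$ is a shifted Cauchy product of the form $\sum_n\bigl(\sum_k F_{\bullet}(x)T_{\bullet}(x)\bigr)z^n$ (with $U$ in place of $T$ in the $u$-cases). Comparing coefficients of $z^n$ yields a preliminary identity; one then peels off the extreme terms of the inner convolution --- the $k=0$ term, which vanishes since $F_0(x)=0$, and the top term ($k=n-1$ or $k=n$), which carries one of the small values $T_0(x)=U_0(x)=1$, $T_1(x)=x$, $U_1(x)=2x$ --- and transposes these to the left, so that the surviving sum runs over exactly $k=1,\dots,n-2$ (or $k=0,\dots,n-1$) as displayed. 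Only in the single case with left-hand side $x^2F_{2n-1}(x)$ does one afterwards invoke the recurrence $F_m(x)=xF_{m-1}(x)+F_{m-2}(x)$ once, to rewrite the naturally occurring $x\bigl(F_{2n}(x)-F_{2n-2}(x)\bigr)$ as $x^2F_{2n-1}(x)$; every other left-hand side comes out directly.

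There is no conceptually hard step: the whole difficulty is clerical, namely running this routine for the twelve generating-function pairs without a sign or index slip. The point most prone to error is reconciling the transposed boundary terms with the coefficients of $F_n(x)$ and $F_{n-1}(x)$ (or of $F_{2n\pm1}(x)$, $F_{2n}(x)$, $F_{2n-2}(x)$) on the left, since it is exactly that combination which manufactures the polynomial coefficients $4x^3-x^2-3x$, $8x^3-2x^2-4x$, $2x^2-x-1$ and $4x^2-x-1$. For this reason I would carry out one representative case in detail --- the first identity, obtained from $f$ and $t_1$ --- and then simply record the generating-function choices above for the remaining eleven, each of which follows verbatim by the same template.
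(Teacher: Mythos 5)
Your proposal is correct and follows exactly the paper's method: for each identity you pair the appropriate Fibonacci ordinary generating function with the appropriate Chebyshev one, derive the functional equation $P_{\mathrm C}g_{\mathrm F}=P_{\mathrm F}g_{\mathrm C}+(Q_{\mathrm C}-Q_{\mathrm F})g_{\mathrm C}g_{\mathrm F}$, and compare coefficients — the paper carries this out only for \eqref{f2-u1} (via $2xf_2=xzu_1-(3x^2-4)zu_1f_2$, the $f_2$/$u_1$ instance of your template) and declares the rest similar. Your systematic identification of the twelve pairings, of how $S=Q_{\mathrm C}-Q_{\mathrm F}$ produces the coefficients $2$, $3x^2-4$, $x^2-2x+2$, and of the single extra use of the Fibonacci recurrence in the $x^2F_{2n-1}(x)$ case, is accurate and in fact more explicit than the paper.
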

\begin{proof} We will prove only \eqref{f2-u1}, the others can be proved in a similar way.
	The formula is essentially a consequence of the functional equation
	\begin{equation*}
	2xf_2(z,x)=xzu_1(z,x)-(3x^2-4)zu_1(z,x)f_2(z,x),
	\end{equation*}
	which can be derived from \eqref{gf-u1} and \eqref{gf-f2}.
\end{proof}

It is worth to note that our previous results can be used to establish connections between Fibonacci polynomials and balancing 
and Lucas-balancing polynomials, respectively. Recall that balancing polynomials $B_n(x)$ and Lucas-balancing polynomials $C_n(x)$ are
generalizations of balancing and Lucas-balancing numbers. They are defined by the same recurrence \cite{Frontczak0}
$w_n(x)=6xw_{n-1}(x)-w_{n-2}(x)$, $n\geq2,$ but with different initial values $B_{0}(x)=0$, $B_{1}(x)=1$ and $C_{0}(x)=1$, $C_{1}(x)=3x$, respectively. From the definitions \eqref{T-def} and \eqref{U-def}, the following connections are easily derived (see \cite{Frontczak0}) 
\begin{gather}\label{BU-CT}
B_n(x)=U_{n-1}(3x), \quad C_{n}=T_n(3x), \quad n\geq1.
\end{gather}

In view of \eqref{BU-CT} and Theorems \ref{Theo1}-\ref{Theo3} relations between Fibonacci and balancing (Lucas-balancing) polynomials
are obvious. In the next statement we present only a few of them.
\begin{corollary} For $n\geq1$,
	\begin{gather*}
	F_n(3x) = C_{n-1}(x)-\sum_{k=1}^{n-2}\bigl(3xC_{n-k-1}(x)-2C_{n-k-2}(x)\bigr)F_k(3x),\\
	F_n(3x)+3xF_{n-1}(3x) = B_{n}(x)-\sum_{k=1}^{n-2}\bigl(3xB_{n-k}(x)-2B_{n-k-1}(x)\bigr)F_k(3x),\\
	9x^2F_{2n}(3x) = C_{2n+1}(x)-C_{2n-1}(x)-(27x^2-4)\sum_{k=1}^{n-1}C_{2(n-k)-1}(x)F_{2k+1}(3x),
		\end{gather*}
	\begin{gather*}
	6xF_{2n+1}(3x) = B_{2(n+1)}(x)-B_{2n}(x)-(27x^2-4)\sum_{k=0}^{n-1}B_{2(n-k)}(x)F_{2k+1}(3x),\\
	F_{2n}(3x)+F_{2n-2}(3x) = 3xB_{2n-1}(x)-(27x^2-4)\sum_{k=1}^{n-1}B_{2(n-k)-1}(x)F_{2k}(3x),\\
	9x^2F_{2n-1}(3x) = 3xC_{2n-1}(x)-(27x^2-4)\sum_{k=1}^{n-1}C_{2(n-k)-1}(x)F_{2k}(3x),\\
	6xF_{2n}(3x) = xB_{2n}(x)-(27x^2-4)\sum_{k=1}^{n-1}B_{2(n-k)}(x)F_{2k}(3x).\\\notag
	\end{gather*}
\end{corollary}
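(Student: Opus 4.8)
\medskip

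\noindent\emph{Sketch of a proof.} The plan is to deduce each identity of the corollary by specializing $x\mapsto 3x$ in the corresponding identity of Theorems~\ref{Theo1}--\ref{Theo3} and then rewriting the Chebyshev polynomials in terms of balancing and Lucas-balancing polynomials through \eqref{BU-CT}. Under $x\mapsto 3x$ the ubiquitous coefficient $3x^2-4$ becomes $27x^2-4$, a coefficient $x$ becomes $3x$, and $x^2$ becomes $9x^2$, which accounts for all the constants appearing on the right-hand sides.

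First I would treat the two leading identities: the first is \eqref{f-t} with $x$ replaced by $3x$, after which every $T_m(3x)$ is replaced by $C_m(x)$ (all occurring indices being $\geq 0$, with $T_0(3x)=1=C_0(x)$); the second is \eqref{f-u} with $x\mapsto 3x$, where the index shift in \eqref{BU-CT} gives $U_{n-1}(3x)=B_n(x)$, $U_{n-1-k}(3x)=B_{n-k}(x)$ and $U_{n-2-k}(3x)=B_{n-k-1}(x)$. The remaining identities follow in precisely the same fashion from the odd- and even-indexed relations of Theorem~\ref{Theo2} and from the mixed-index relations of Theorem~\ref{Theo3}; for instance the last displayed identity is \eqref{f2-u1} after $x\mapsto 3x$ together with $U_{2n-1}(3x)=B_{2n}(x)$ and $U_{2(n-k)-1}(3x)=B_{2(n-k)}(x)$.

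The one step that is not a literal substitution concerns the identity whose left-hand side is $9x^2F_{2n}(3x)$. Here I would first invoke the defining recurrence $F_{2n+1}(x)=xF_{2n}(x)+F_{2n-1}(x)$ to rewrite $x\bigl(F_{2n+1}(x)-F_{2n-1}(x)\bigr)=x^2F_{2n}(x)$; applying this to the first identity of Theorem~\ref{Theo2} and then putting $x\mapsto 3x$ turns its left-hand side into $9x^2F_{2n}(3x)$, while its right-hand side is translated via \eqref{BU-CT} exactly as above.

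The hard part --- indeed essentially the only point requiring care --- is the index bookkeeping: one must verify that every index at which \eqref{BU-CT} is applied lies in the admissible range $n\geq 1$, falling back when necessary on the natural conventions $T_0(3x)=1=C_0(x)$ and $U_{-1}(3x)=0=B_0(x)$, and one must keep the shift $B_n(x)=U_{n-1}(3x)$ consistent inside each summation (for instance $U_{2(n-k-1)}(3x)=B_{2(n-k)-1}(x)$). Once this is done, each of the seven displayed identities drops out immediately from the matching line of Theorems~\ref{Theo1}--\ref{Theo3}.
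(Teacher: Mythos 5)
Your proposal is correct and is exactly the paper's (unstated) argument: the authors simply assert the corollary follows from Theorems~\ref{Theo1}--\ref{Theo3} via the substitution $x\mapsto 3x$ and the connections \eqref{BU-CT}, including the use of the recurrence to rewrite $x\bigl(F_{2n+1}(x)-F_{2n-1}(x)\bigr)=x^2F_{2n}(x)$ for the third identity. Incidentally, your careful bookkeeping exposes two apparent misprints in the corollary as printed --- the third identity's sum should start at $k=0$ to match Theorem~\ref{Theo2}, and the last identity should read $3xB_{2n}(x)$ rather than $xB_{2n}(x)$ --- but these are defects of the statement, not of your method.
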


\section*{4.~Chebyshev-Fibonacci polynomial identities via \break exponential generating functions} 

Functional equations for exponential generating functions will yield connections between 
Chebyshev and Fibonacci polynomials involving binomial coefficients. Recall that we use the following notation
\begin{gather*}
\alpha(x)=x+\sqrt{x^2-1}, \quad \beta(x)=x-\sqrt{x^2-1},\quad \rho(x)=\frac{x+\sqrt{x^2+4}}{2}, \quad \sigma(x)=\frac{x-\sqrt{x^2+4}}{2}.
\end{gather*}
\begin{theorem}
	For $n\geq0$, the following identities hold
	\begin{gather}
	\sum_{k=0}^{n-1}{n\choose k}\bigl(\sqrt{x^2+4}\bigr)^{n-1-k}\bigl(1-(-1)^{n-k}\bigr)T_k(x)\nonumber\\
	= \sum_{k=1}^{n-1}{n\choose k}2^{k-1}\bigl(\sqrt{x^2-1}\bigr)^{n-k}\bigl(1+(-1)^{n-k}\bigr)F_k(x)\label{exp-t-f},\\
\sum_{k=0}^{n-1}{n\choose k}\bigl(\sqrt{x^2+4}\bigr)^{n-1-k}\bigl(1-(-1)^{n-k}\bigr)U_k(x)\nonumber\\
	= \sum_{k=1}^{n-1}{n\choose k}2^{k-1}\bigl(\sqrt{x^2-1}\bigr)^{n-1-k}\bigl(\alpha(x)-(-1)^{n-k}\beta(x)\bigr)F_k(x)\label{exp-u-f}.
	\end{gather}
\end{theorem}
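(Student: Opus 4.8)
The plan is to run the generating-function machinery of Theorems~\ref{Theo1}--\ref{Theo3} at the level of \emph{exponential} generating functions, so that the Cauchy product $\bigl(\sum_k a_k\tfrac{z^k}{k!}\bigr)\bigl(\sum_k b_k\tfrac{z^k}{k!}\bigr)=\sum_n\bigl(\sum_{k=0}^n\binom{n}{k} a_kb_{n-k}\bigr)\tfrac{z^n}{n!}$ automatically supplies the binomial coefficients. First I would rewrite the exponential generating functions of Section~2 in exponential form: using $\alpha(x)\beta(x)=1$ and $\alpha(x)+\beta(x)=2x$, equations \eqref{egf-t}--\eqref{egf-u} read
\[
\tau(z,x)=\tfrac12\bigl(e^{\alpha(x)z}+e^{\beta(x)z}\bigr),\qquad
\omega(z,x)=\frac{\alpha(x)e^{\alpha(x)z}-\beta(x)e^{\beta(x)z}}{2\sqrt{x^2-1}},
\]
while \eqref{egf-F}, with $\rho(x)-\sigma(x)=\sqrt{x^2+4}$ and after the dilation $z\mapsto 2z$, reads
\[
\phi(2z,x)=\sum_{n\ge0}2^nF_n(x)\frac{z^n}{n!}=\frac{2e^{xz}}{\sqrt{x^2+4}}\,\sinh\bigl(\sqrt{x^2+4}\,z\bigr).
\]

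The crux is that one function admits two factorizations. For \eqref{exp-t-f} I would set
\[
G(z,x)=\frac{2\,e^{xz}\cosh\bigl(\sqrt{x^2-1}\,z\bigr)\sinh\bigl(\sqrt{x^2+4}\,z\bigr)}{\sqrt{x^2+4}}.
\]
Grouping $e^{xz}\cosh(\sqrt{x^2-1}\,z)=\tau(z,x)$ gives $G(z,x)=\tau(z,x)\cdot\frac{e^{\sqrt{x^2+4}\,z}-e^{-\sqrt{x^2+4}\,z}}{\sqrt{x^2+4}}$; since the $\tfrac{z^m}{m!}$-coefficient of the second factor is $\bigl(\sqrt{x^2+4}\bigr)^{m-1}\bigl(1-(-1)^m\bigr)$, the Cauchy product with $\tau(z,x)=\sum_kT_k(x)\tfrac{z^k}{k!}$ makes the coefficient of $\tfrac{z^n}{n!}$ in $G$ equal to the left-hand side of \eqref{exp-t-f}. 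Grouping instead $\frac{2e^{xz}}{\sqrt{x^2+4}}\sinh(\sqrt{x^2+4}\,z)=\phi(2z,x)$ gives $G(z,x)=\phi(2z,x)\cosh(\sqrt{x^2-1}\,z)$; the $\tfrac{z^m}{m!}$-coefficient of $\cosh(\sqrt{x^2-1}\,z)$ is $\tfrac12\bigl(\sqrt{x^2-1}\bigr)^m\bigl(1+(-1)^m\bigr)$, so the Cauchy product with $\phi(2z,x)=\sum_k2^kF_k(x)\tfrac{z^k}{k!}$ makes that coefficient equal to the right-hand side of \eqref{exp-t-f}, and equating the two proves \eqref{exp-t-f}. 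Identity \eqref{exp-u-f} comes out the same way with $\omega$ in place of $\tau$ and with $\cosh(\sqrt{x^2-1}\,z)$ replaced by $\frac{1}{2\sqrt{x^2-1}}\bigl(\alpha(x)e^{\sqrt{x^2-1}\,z}-\beta(x)e^{-\sqrt{x^2-1}\,z}\bigr)$, whose $\tfrac{z^m}{m!}$-coefficient $\tfrac12\bigl(\sqrt{x^2-1}\bigr)^{m-1}\bigl(\alpha(x)-(-1)^m\beta(x)\bigr)$ produces the factors $(\sqrt{x^2-1})^{n-1-k}$ and $\alpha(x)-(-1)^{n-k}\beta(x)$ on the right of \eqref{exp-u-f}.

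The Cauchy products are routine; the work lies in the setup and the bookkeeping. The main obstacle is spotting the right auxiliary factor — equivalently, recognizing the functional identity $\tau(z,x)\cdot\frac{e^{wz}-e^{-wz}}{w}=\phi(2z,x)\cosh(\sqrt{x^2-1}\,z)$ with $w=\sqrt{x^2+4}$ (and its $\omega$-analogue) — and noticing that it is the weighted polynomial $2^nF_n(x)$, not $F_n(x)$ itself, that occurs, so the dilation $z\mapsto 2z$ in \eqref{egf-F} is forced; the shifted exponents $(\sqrt{x^2+4})^{m-1}$ and $(\sqrt{x^2-1})^{m-1}$ then arise simply from dividing $e^{wz}-e^{-wz}$ by $w$. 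One must also check that both sides really are polynomials in $x$: on the left the factor $1-(-1)^{n-k}$ kills every odd power of $\sqrt{x^2+4}$, and on the right the factors $1+(-1)^{n-k}$, respectively $\alpha(x)-(-1)^{n-k}\beta(x)$ (using $\alpha(x)-\beta(x)=2\sqrt{x^2-1}$ and $\alpha(x)+\beta(x)=2x$), kill every odd power of $\sqrt{x^2-1}$. Finally I would account for the boundary summands — $F_0(x)=0$ removes $k=0$ from the Fibonacci side and $1-(-1)^0=0$ removes $k=n$ from the Chebyshev side — and then compare coefficients of $\tfrac{z^n}{n!}$.
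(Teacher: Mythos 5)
Your proof is correct and is essentially the paper's own argument: your two factorizations of $G$ (and of its $\omega$-analogue) are exactly the paper's functional equations $2\tau\bigl(\tfrac{z}{2},x\bigr)\sinh\bigl(\tfrac{\sqrt{x^2+4}}{2}z\bigr)=\sqrt{x^2+4}\,\phi(z,x)\cosh\bigl(\tfrac{\sqrt{x^2-1}}{2}z\bigr)$ and its $\omega$-counterpart, up to the immaterial rescaling $z\mapsto 2z$ (you dilate $\phi$ where the paper contracts $\tau$ and $\omega$, which is why your $2^k$ sits on the Fibonacci side from the start). One minor observation: both your bookkeeping and the paper's own displayed computation leave the Fibonacci sums running up to $k=n$ (the $k=n$ term equals $2^nF_n(x)$ and does not vanish), so the upper limit $n-1$ printed on the right-hand sides of \eqref{exp-t-f} and \eqref{exp-u-f} appears to be a typo in the statement rather than something your argument needs to produce.
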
 
\begin{proof} 
	To prove formula \eqref{exp-t-f} we use the generating functions \eqref{egf-t} and \eqref{egf-F}. 
	They give the functional equation 
	\begin{equation*}
	2\tau\left(\frac{z}{2},x\right)\sinh\Bigl(\frac{\sqrt{x^2+4}}{2}z\Bigr)=\phi(z,x)\sqrt{x^2+4}\,\cosh\Bigl(\frac{\sqrt{x^2-1}}{2}z\Bigr).
	\end{equation*}
	From this equation we obtain
	\begin{gather*}
	\sum_{n\geq0}\sum_{k=0}^n{n\choose k} \frac{T_k(x)}{2^k}\left(\Bigl(\frac{\sqrt{x^2+4}}{2}\Bigr)^{n-k}
	-\Bigl(-\frac{\sqrt{x^2+4}}{2}\Bigr)^{n-k}\right)\frac{z^n}{n!}\\
	=\frac{\sqrt{x^2+4}}{2}\sum_{n\geq0}\sum_{k=1}^{n}{n\choose k}F_k(x)
	\left(\Bigl(\frac{\sqrt{x^2-1}}{2}\Bigr)^{n-k}+\Bigl(-\frac{\sqrt{x^2-1}}{2}\Bigr)^{n-k}\right)\frac{z^n}{n!}.
	\end{gather*}
	Comparing the coefficients of the both sides gives
	\begin{gather*}
	\sum_{k=0}^n{n\choose k} \frac{T_k(x)}{2^k}\left(\Bigl(\frac{\sqrt{x^2+4}}{2}\Bigr)^{n-k}-\Bigl(-\frac{\sqrt{x^2+4}}{2}\Bigr)^{n-k}\right)\\
	=\frac{\sqrt{x^2+4}}{2}\sum_{k=1}^{n}{n\choose k}F_k(x)
	\left(\Bigl(\frac{\sqrt{x^2-1}}{2}\Bigr)^{n-k}+\Bigl(-\frac{\sqrt{x^2-1}}{2}\Bigr)^{n-k}\right).
	\end{gather*}
	and after simplifications we get \eqref{exp-t-f}. 
	The proof of \eqref{exp-u-f} follows in a similar way and is based on the functional equation 
	\begin{gather*}
	2\sqrt{x^2-1}\sinh\Bigl(\frac{\sqrt{x^2+4}}{2}z\Bigr)\omega\!\left(\frac{z}{2},x\right)\\
	=\sqrt{x^2+4}\left(x\sinh\Bigl(\frac{\sqrt{x^2-1}}{2}z\Bigr)+\sqrt{x^2-1}\cosh\Bigl(\frac{\sqrt{x^2-1}}{2}z\Bigr)\right)\phi(z,x),
	\end{gather*}
	which we derive from generating functions \eqref{egf-u} and \eqref{egf-F}. 
\end{proof}

The next two theorems give us relations involving odd and even indexed Chebyshev and Fibonacci polynomial sequences. 
\begin{theorem}
	For $n\geq0$, the following formulas hold
	\begin{gather*}
	x\sum_{k=0}^{n}{n\choose k}\Bigl(\frac{\sqrt{x^2+4}(2x^3-x)}{x^2+2}\Bigr)^{n-1-k}\bigl(\rho(x)-(-1)^{n-k}\sigma(x)\bigr)T_{2k+1}(x)\nonumber\\ 
	=\bigl(2x\sqrt{x^2-1}\bigl)^{n-1}\sum_{k=0}^{n}{n\choose k}\Bigl(\frac{2x^2-1}{(x^3+2x)\sqrt{x^2-1}}\Bigr)^{k-1}\bigl(\alpha(x)+(-1)^{n-k}\beta(x)\bigr)F_{2k+1}(x) \label{exp-t1-f1}
	\end{gather*}
	and
	\begin{gather*}
	\frac12\sum_{k=0}^{n}{n\choose k}\Bigl(\frac{\sqrt{x^2+4}(2x^3-x)}{x^2+2}\Bigr)^{n-1-k}\bigl(\rho(x)-(-1)^{n-k}\sigma(x)\bigr)U_{2k+1}(x)\nonumber\\ 
	=\bigl(2x\sqrt{x^2-1}\bigr)^{n-2}\sum_{k=0}^{n}{n\choose k}\Bigl(\frac{2x^2-1}{(x^3+2x)\sqrt{x^2-1}}\Bigr)^{k-1}\\
	\hspace{-1.8cm}	\times\bigl(\alpha^2(x)-(-1)^{n-k}\beta^2(x)\bigr)F_{2k+1}(x). \label{exp-u1-f1}
	\end{gather*}
\end{theorem}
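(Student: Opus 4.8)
The plan is to run the same generating-function machinery used in the preceding theorems, the one extra ingredient being a rescaling of $\phi_1$ chosen so that it acquires the exponential prefactor of $\tau_1$ and of $\omega_1$. Set
\begin{equation*}
\lambda=\frac{2(2x^2-1)}{x^2+2},\qquad q=2x\sqrt{x^2-1},\qquad r=\frac{x\sqrt{x^2+4}}{2}\,\lambda=\frac{(2x^3-x)\sqrt{x^2+4}}{x^2+2},
\end{equation*}
and write $d=\tfrac{2x^2-1}{(x^3+2x)\sqrt{x^2-1}}$; the only facts needed about these quantities are the elementary identities $\tfrac{x^2+2}{2}\lambda=2x^2-1$ and $qd=\lambda$. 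Substituting $z\mapsto\lambda z$ in \eqref{egf-F1} and using $\tfrac{x^2+2}{2}\lambda=2x^2-1$ yields
\begin{equation*}
\phi_1(\lambda z,x)=\frac{e^{(2x^2-1)z}}{\sqrt{x^2+4}}\bigl(x\sinh(rz)+\sqrt{x^2+4}\cosh(rz)\bigr),
\end{equation*}
so that $\phi_1(\lambda z,x)$, $\tau_1(z,x)$ from \eqref{egf-t1}, and $\omega_1(z,x)$ from \eqref{egf-u1} all carry the same factor $e^{(2x^2-1)z}$.

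First I would record the two functional equations obtained by cancelling this common factor from the product of the respective hyperbolic cores, namely
\begin{gather*}
\tau_1(z,x)\bigl(x\sinh(rz)+\sqrt{x^2+4}\cosh(rz)\bigr)=\sqrt{x^2+4}\,\phi_1(\lambda z,x)\bigl(x\cosh(qz)+\sqrt{x^2-1}\sinh(qz)\bigr),\\
\omega_1(z,x)\bigl(x\sinh(rz)+\sqrt{x^2+4}\cosh(rz)\bigr)=\frac{\sqrt{x^2+4}}{\sqrt{x^2-1}}\,\phi_1(\lambda z,x)\bigl((2x^2-1)\sinh(qz)+2x\sqrt{x^2-1}\cosh(qz)\bigr).
\end{gather*}
Both hold identically, since each side equals $e^{(2x^2-1)z}$ times the same product of two hyperbolic expressions.

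Next I would expand both sides of each equation as exponential power series in $z$ and compare the coefficients of $z^n/n!$ via the Cauchy product. To bring the coefficients into the form of the statement I would use $x\sinh(rz)+\sqrt{x^2+4}\cosh(rz)=\rho(x)e^{rz}-\sigma(x)e^{-rz}$ (from $\rho+\sigma=x$, $\rho-\sigma=\sqrt{x^2+4}$), $x\cosh(qz)+\sqrt{x^2-1}\sinh(qz)=\tfrac12\bigl(\alpha(x)e^{qz}+\beta(x)e^{-qz}\bigr)$ (from $\alpha+\beta=2x$, $\alpha-\beta=2\sqrt{x^2-1}$), and $(2x^2-1)\sinh(qz)+2x\sqrt{x^2-1}\cosh(qz)=\tfrac12\bigl(\alpha^2(x)e^{qz}-\beta^2(x)e^{-qz}\bigr)$ (from $\alpha^2-\beta^2=2q$ and $\alpha^2+\beta^2=2(2x^2-1)$). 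Since the exponential-generating coefficients of $\tau_1(z,x)$, $\omega_1(z,x)$ and $\phi_1(\lambda z,x)$ are $T_{2k+1}(x)$, $U_{2k+1}(x)$ and $\lambda^k F_{2k+1}(x)$ respectively, the comparison produces
\begin{gather*}
\sum_{k=0}^n\binom{n}{k}r^{n-k}\bigl(\rho-(-1)^{n-k}\sigma\bigr)T_{2k+1}(x)=\frac{\sqrt{x^2+4}}{2}\sum_{k=0}^n\binom{n}{k}\lambda^k q^{n-k}\bigl(\alpha+(-1)^{n-k}\beta\bigr)F_{2k+1}(x),\\
\sum_{k=0}^n\binom{n}{k}r^{n-k}\bigl(\rho-(-1)^{n-k}\sigma\bigr)U_{2k+1}(x)=\frac{\sqrt{x^2+4}}{2\sqrt{x^2-1}}\sum_{k=0}^n\binom{n}{k}\lambda^k q^{n-k}\bigl(\alpha^2-(-1)^{n-k}\beta^2\bigr)F_{2k+1}(x).
\end{gather*}

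Finally I would peel off the scalar factors to reach the stated normalization. On the left write $r^{n-k}=r\cdot r^{n-1-k}$; on the right use $\lambda^k q^{n-k}=q^n d^k$ (valid since $\lambda=qd$), and then $q^n d^k=\lambda\,q^{n-1}d^{k-1}$ in the first identity, $q^n d^k=q\lambda\,q^{n-2}d^{k-1}$ in the second. Dividing through, the surviving constants collapse: one checks $\tfrac{\sqrt{x^2+4}\,\lambda}{2r}=\tfrac1x$ and $\tfrac{\sqrt{x^2+4}\,q\lambda}{4r\sqrt{x^2-1}}=1$ using $r=\tfrac{x\sqrt{x^2+4}}{2}\lambda$ and $q=2x\sqrt{x^2-1}$; multiplying the first relation by $x$ then gives precisely the two displayed identities of the theorem, with $r$, $q$, $d$ in their stated closed forms. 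I expect this closing rearrangement of the prefactors to be the only mildly finicky step; everything before it is the routine already used in the earlier theorems.
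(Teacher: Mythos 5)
Your proposal is correct and follows essentially the same route as the paper: it derives exactly the same two functional equations relating $\tau_1(z,x)$, $\omega_1(z,x)$ and $\phi_1\bigl(\tfrac{4x^2-2}{x^2+2}z,x\bigr)$ by matching the common exponential factor $e^{(2x^2-1)z}$, then compares coefficients via the Cauchy product. The only difference is cosmetic (you keep the hyperbolic forms where the paper rewrites one factor as $\alpha^2(x)e^{qz}-\beta^2(x)e^{-qz}$), and your final bookkeeping of the prefactors $\tfrac{\sqrt{x^2+4}\,\lambda}{2r}=\tfrac1x$ and $\tfrac{\sqrt{x^2+4}\,q\lambda}{4r\sqrt{x^2-1}}=1$ checks out.
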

\begin{proof} 
	The stated formulas follow from the functional equations
	\begin{gather*}
	\left(x\sinh\Bigl(\frac{(2x^3-x)\sqrt{x^2+4}}{x^2+2}z\Bigr)+\sqrt{x^2+4}\cosh\Bigl(\frac{(2x^3-x)\sqrt{x^2+4}}{x^2+2}z\Bigr)\right)\tau_1(z,x)\\ 
	=\sqrt{x^2+4}\left(x\cosh\bigl(2x\sqrt{x^2-1}z\bigr)+\sqrt{x^2-1}\sinh\bigl(2x\sqrt{x^2-1}z\bigr)\right)\phi_1\left(\frac{4x^2-2}{x^2+2}z,x\right)
	\end{gather*}
	and
	\begin{gather*}
	2\left(x\sinh\Bigl(\frac{(2x^3-x)\sqrt{x^2+4}}{x^2+2}z\Bigr)+\sqrt{x^2+4}\cosh\Bigl(\frac{(2x^3-x)\sqrt{x^2+4}}{x^2+2}z\Bigr)\right)\omega_1(z,x)\\ 
	=\sqrt{\frac{x^2+4}{x^2-1}}\left(\alpha^2(x)e^{2x\sqrt{x^2-1}z}-\beta^2(x)e^{-2x\sqrt{x^2-1}z}\right)\phi_1\left(\frac{4x^2-2}{x^2+2}z,x\right),
	\end{gather*}
	that one can obtain from \eqref{egf-t1}, \eqref{egf-F1} and  \eqref{egf-u1}, \eqref{egf-F1}, respectively.
\end{proof}
\begin{theorem}
	For $n\geq0$, the following formulas hold
	\begin{gather*}
	x\sum_{k=0}^{n-1}{n\choose k}\Bigl(\frac{\sqrt{x^2+4}(2x^3-x)}{x^2+2}\Bigr)^{n-k-1}\bigl(1-(-1)^{n-k}\bigr)T_{2k}(x)\nonumber\\ 
	=\bigl(2x\sqrt{x^2-1}\bigr)^{n-1}\sum_{k=1}^{n}{n\choose k}\Bigl(\frac{2x^2-1}{(x^3+2x)\sqrt{x^2-1}}\Bigr)^{k-1}\bigl(1+(-1)^{n-k}\bigr)F_{2k}(x)
	\end{gather*}
	and  
	\begin{gather*}
	\frac12\sum_{k=0}^{n}{n\choose k}\Bigl(\frac{\sqrt{x^2+4}(2x^3-x)}{x^2+2}\Bigr)^{n-k-1}\bigl(1-(-1)^{n-k}\bigr)U_{2k}(x)\nonumber\\ 
	=\bigl(2x\sqrt{x^2-1}\bigr)^{n-2}\sum_{k=1}^{n}{n\choose k}\Bigl(\frac{2x^2-1}{(x^3+2x)\sqrt{x^2-1}}\Bigr)^{k-1}\bigl(\alpha(x)-(-1)^{n-k}\beta(x)\bigr)F_{2k}(x).
	\end{gather*}
\end{theorem}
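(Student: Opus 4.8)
The plan is to imitate the proof of the preceding theorem, now with the even-indexed exponential generating functions \eqref{egf-t2}, \eqref{egf-F2}, and the displayed formula for $\omega_2(z,x)$ from Section~2. First I would note that under the substitution $z\mapsto\frac{4x^2-2}{x^2+2}z$ the series $\phi_2$ becomes
\[
\phi_2\!\left(\frac{4x^2-2}{x^2+2}z,x\right)=\frac{2\,e^{(2x^2-1)z}}{\sqrt{x^2+4}}\,\sinh\!\left(\frac{(2x^3-x)\sqrt{x^2+4}}{x^2+2}\,z\right),
\]
so that it carries the same exponential factor $e^{(2x^2-1)z}$ as $\tau_2(z,x)=e^{(2x^2-1)z}\cosh\!\left(2x\sqrt{x^2-1}\,z\right)$ and as $\omega_2(z,x)=\frac{e^{(2x^2-1)z}}{2\sqrt{x^2-1}}\left(\alpha(x)e^{2x\sqrt{x^2-1}z}-\beta(x)e^{-2x\sqrt{x^2-1}z}\right)$, the last expression being obtained from the Section~2 formula for $\omega_2$ by writing $x\sinh(bz)+\sqrt{x^2-1}\cosh(bz)=\tfrac12\left(\alpha(x)e^{bz}-\beta(x)e^{-bz}\right)$ with $b=2x\sqrt{x^2-1}$. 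Cancelling the common exponential factor yields at once the two functional equations
\[
2\sinh\!\left(\frac{(2x^3-x)\sqrt{x^2+4}}{x^2+2}z\right)\tau_2(z,x)=\sqrt{x^2+4}\,\cosh\!\left(2x\sqrt{x^2-1}\,z\right)\phi_2\!\left(\frac{4x^2-2}{x^2+2}z,x\right)
\]
and
\[
4\sqrt{x^2-1}\,\sinh\!\left(\frac{(2x^3-x)\sqrt{x^2+4}}{x^2+2}z\right)\omega_2(z,x)=\sqrt{x^2+4}\left(\alpha(x)e^{2x\sqrt{x^2-1}z}-\beta(x)e^{-2x\sqrt{x^2-1}z}\right)\phi_2\!\left(\frac{4x^2-2}{x^2+2}z,x\right),
\]
which here play the role of the functional equations used in the preceding proof.

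Next I would compare the coefficients of $z^n/n!$ on the two sides of each equation. Writing $\sinh$, $\cosh$, and $\alpha(x)e^{bz}-\beta(x)e^{-bz}$ in exponential form, using the Cauchy product of exponential generating functions, and using $\phi_2\!\left(\frac{4x^2-2}{x^2+2}z,x\right)=\sum_{k\ge0}F_{2k}(x)\,\frac{(4x^2-2)^k}{(x^2+2)^k}\,\frac{z^k}{k!}$, the first equation yields
\[
\sum_{k}\binom{n}{k}a^{n-k}\bigl(1-(-1)^{n-k}\bigr)T_{2k}(x)=\frac{\sqrt{x^2+4}}{2}\sum_{k}\binom{n}{k}b^{n-k}\bigl(1+(-1)^{n-k}\bigr)F_{2k}(x)\,\frac{(4x^2-2)^k}{(x^2+2)^k},
\]
where $a=\frac{(2x^3-x)\sqrt{x^2+4}}{x^2+2}$ and $b=2x\sqrt{x^2-1}$, while the second equation yields the same identity with $T_{2k}$ replaced by $U_{2k}$, the left-hand side multiplied by $2\sqrt{x^2-1}$, the factor $\frac{\sqrt{x^2+4}}{2}$ on the right replaced by $\sqrt{x^2+4}$, and $1+(-1)^{n-k}$ replaced by $\alpha(x)-(-1)^{n-k}\beta(x)$. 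Since $F_0(x)=0$ the $k=0$ summand on the right drops out, and since $1-(-1)^{n-k}=0$ for $k=n$ the $k=n$ summand on the left drops out, which gives the summation ranges claimed in the theorem.

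Finally, I would bring the raw identities into the stated shape by bookkeeping alone. On the left I write $a^{n-k}=a\cdot a^{n-k-1}$ and pull the constant $a=\frac{x(2x^2-1)\sqrt{x^2+4}}{x^2+2}$ out of the sum; on the right I use $4x^2-2=2(2x^2-1)$ and $x^3+2x=x(x^2+2)$ to write
\[
b^{n-k}\,\frac{(4x^2-2)^k}{(x^2+2)^k}=\frac{2(2x^2-1)}{x^2+2}\bigl(2x\sqrt{x^2-1}\bigr)^{n-1}\left(\frac{2x^2-1}{(x^3+2x)\sqrt{x^2-1}}\right)^{k-1}
\]
(and in the $U$-case I peel off one more factor $2x\sqrt{x^2-1}$, which is absorbed along with the extra prefactor $2\sqrt{x^2-1}$). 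After this, each identity carries the same scalar on both sides — namely $\frac{(2x^2-1)\sqrt{x^2+4}}{x^2+2}$ for the $T$-identity and $\frac{4x(2x^2-1)\sqrt{x^2-1}\sqrt{x^2+4}}{x^2+2}$ for the $U$-identity — so this scalar can be cancelled, leaving precisely the two identities of the theorem. The main obstacle is exactly this last manipulation: one must simultaneously keep track of the powers of $\sqrt{x^2-1}$, $\sqrt{x^2+4}$, $x$, $2x^2-1$, and $x^2+2$ and match the exponents on the two sides — routine but easy to slip on — and also check that the small cases $n=0,1$ do not really involve a negative power of $a$, which they do not, since the term that would carry $a^{-1}$ always comes multiplied by the vanishing factor $1-(-1)^{n-k}$ or by $F_0(x)=0$.
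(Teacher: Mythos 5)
Your proof is correct and follows essentially the same route as the paper: derive the two functional equations relating $\tau_2$, $\omega_2$ and $\phi_2\bigl(\tfrac{4x^2-2}{x^2+2}z,x\bigr)$, expand in powers of $z$, and compare coefficients of $z^n/n!$. (In fact your second functional equation carries the correct constant, whereas the paper's displayed version of it is off by a factor of $4$; your bookkeeping of the common scalars $\tfrac{(2x^2-1)\sqrt{x^2+4}}{x^2+2}$ and $\tfrac{4x(2x^2-1)\sqrt{x^2-1}\sqrt{x^2+4}}{x^2+2}$ checks out.)
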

\begin{proof} Generating functions \eqref{egf-t2}, \eqref{egf-F2} and \eqref{egf-u1}, \eqref{egf-F2}, respectively,  yield 
	\begin{gather*}
	2\sinh\Bigl(\frac{(2x^3-x)\sqrt{x^2+4}}{x^2+2}z\Bigr)\tau_2(z,x)=\sqrt{x^2+4}\cosh\bigl(2x\sqrt{x^2-1}z\bigr)\phi_2\left(\frac{4x^2-2}{x^2+2}z,x\right)
	\end{gather*}
	and
	\begin{gather*}
	\sqrt{x^2-1}\sinh\Bigl(\frac{(2x^3-x)\sqrt{x^2+4}}{x^2+2}z\Bigr)\omega_2(z,x)\\ 
	=2\sqrt{x^2+4}\left(x\sinh\bigl(2x\sqrt{x^2-1}z\bigr)+\sqrt{x^2-1}\cosh\bigl(2x\sqrt{x^2-1}z\bigr)\right)\phi_2\left(\frac{4x^2-2}{x^2+2}z,x\right).
	\end{gather*}
	The results follow from writing in terms of power series and collecting terms.
\end{proof}

The last theorem contains additional relations for Chebyshev and Fibonacci polynomials that we found. 
\begin{theorem}
	For $n\geq0$, the following formulas hold
	\begin{gather*}
	\sum_{k=0}^{n-1}{n\choose k}\Bigl(\frac{\sqrt{x^2+4}(2x^2-1)}{x}\Bigr)^{n-k-1}\bigl(1-(-1)^{n-k}\bigr)T_{2k}(x)\nonumber\\ 
	=\sum_{k=1}^{n}{n\choose k}\Bigl(\frac{4x^2-2}{x}\Bigr)^{k-1}\bigl(2x\sqrt{x^2-1}\bigr)^{n-k}\bigl(1+(-1)^{n-k}\bigr)F_{k}(x),\\[2pt]
	\sum_{k=0}^{n}{n\choose k}\Bigl(\frac{\sqrt{x^2+4}(2x^2-1)}{x}\Bigr)^{n-k-1}\bigl(1-(-1)^{n-k}\bigr)U_{2k}(x)\nonumber\\ 
	=2x\sum_{k=1}^{n}{n\choose k}\Bigl(\frac{4x^2-2}{x}\Bigr)^{k-1}\bigl(2x\sqrt{x^2-1}\bigr)^{n-k-1}\bigl(\alpha(x)-(-1)^{n-k}\beta(x)\bigr)F_{k}(x),\\[2pt]
	\sum_{k=0}^{n}{n\choose k}\Bigl(\frac{\sqrt{x^2+4}(2x^2-1)}{x}\Bigr)^{n-k-1}\bigl(1-(-1)^{n-k}\bigr)T_{2k+1}(x)\nonumber\\ 
	=\sum_{k=1}^{n}{n\choose k}\Bigl(\frac{4x^2-2}{x}\Bigr)^{k-1}\bigl(2x\sqrt{x^2-1}\bigr)^{n-k}\bigl(\alpha(x)+(-1)^{n-k}\beta(x)\bigr)F_{k}(x),\\[2pt]
	\sum_{k=0}^{n}{n\choose k}\Bigl(\frac{\sqrt{x^2+4}(2x^2-1)}{x}\Bigr)^{n-k-1}\bigl(1-(-1)^{n-k}\bigr)U_{2k+1}(x)\nonumber\\ 
	=x\sum_{k=1}^{n}{n\choose k} \Bigl(\frac{4x^2-2}{x}\Bigr)^{k-1}\bigl(2x\sqrt{x^2-1}\bigr)^{n-k-1}\bigl(\alpha^2(x)-(-1)^{n-k}\beta^2(x)\bigr)F_{k}(x),\\[2pt]
		x\sum_{k=0}^{n}{n\choose k}\Bigl(\frac{x^2+2}{2x}\Bigr)^k \Bigl(\frac{x\sqrt{x^2+4}}{2}\Bigr)^{n-k-1}\bigl(\rho(x)-(-1)^{n-k}\sigma(x)\bigr)T_{k}(x)\nonumber\\ 
	=\sum_{k=0}^{n}{n\choose k}\Bigl(\frac{(x^2+2)\sqrt{x^2-1}}{2x}\Bigr)^{n-k}\bigl(1+(-1)^{n-k}\bigr)F_{2k+1}(x),\\[2pt]
		x\sum_{k=0}^{n}{n\choose k}\Bigl(\frac{x^2+2}{2x}\Bigr)^{k-1}\Bigl(\frac{x\sqrt{x^2+4}}{x}\Bigr)^{n-k-1}\bigl(\rho(x)-(-1)^{n-k}\sigma(x)\bigr)U_{k}(x)\nonumber\\ 
	=\sum_{k=0}^{n}{n\choose k} \Bigl(\frac{(x^2+2)\sqrt{x^2-1}}{2x}\Bigr)^{n-k-1}\bigl(\alpha(x)-(-1)^{n-k}\beta(x)\bigr)F_{2k+1}(x),\\[2pt]
		\sum_{k=0}^{n}{n\choose k}\Bigl(\frac{x^2+2}{2x}\Bigr)^k\Bigl(\frac{x\sqrt{x^2+4}}{2}\Bigr)^{n-k-1}\bigl(1-(-1)^{n-k}\bigr)T_{k}(x)\nonumber\\ 
	=\sum_{k=1}^{n}{n\choose k} \Bigl(\frac{(x^2+2)\sqrt{x^2-1}}{2x}\Bigr)^{n-k}\bigl(1+(-1)^{n-k}\bigr)F_{2k}(x),
	\end{gather*}
	\begin{gather*}
	x\sum_{k=0}^{n}{n\choose k} \Bigl(\frac{x^2+2}{2x}\Bigr)^{k-1}\Bigl(\frac{x\sqrt{x^2+4}}{2}\Bigr)^{n-k-1}\bigl(1-(-1)^{n-k}\bigr)U_{k}(x)\nonumber\\
	=\sum_{k=0}^{n}{n\choose k} \Bigl(\frac{(x^2+2)\sqrt{x^2-1}}{2x}\Bigr)^{n-k-1} \bigl(\alpha(x)+(-1)^{n-k}\beta(x)\bigr)F_{2k}(x),\nonumber\\[2pt]
	x\sum_{k=0}^{n}{n\choose k} \Bigl(\frac{x^2+2}{4x^2-2}\Bigr)^{k}\Bigl(\frac{x\sqrt{x^2+4}}{2}\Bigr)^{n-k}\bigl(1-(-1)^{n-k}\bigr)T_{2k+1}(x)\nonumber\\ 
	=\sum_{k=0}^{n}{n\choose k}\Bigl(\frac{x(x^2+2)\sqrt{x^2-1}}{2x^2-1}\Bigr)^{n-k} \bigl(\alpha(x)+(-1)^{n-k}\beta(x)\bigr)F_{2k}(x),\nonumber\\[2pt]
	\sum_{k=0}^{n}{n\choose k} \Bigl(\frac{x^2+2}{4x^2-2}\Bigr)^{k-1}\Bigl(\frac{x\sqrt{x^2+4}}{2}\Bigr)^{n-k-1}\bigl(1-(-1)^{n-k}\bigr)U_{2k+1}(x)\nonumber\\ 
	=2\sum_{k=0}^{n}{n\choose k} \Bigl(\frac{x(x^2+2)\sqrt{x^2-1}}{2x^2-1}\Bigr)^{n-k-1} \bigl(\alpha^2(x)-(-1)^{n-k}\beta^2(x)\bigr)F_{2k}(x),\nonumber\\[2pt]
	x\sum_{k=0}^{n}{n\choose k}\Bigl(\frac{x^2+2}{4x^2-2}\Bigr)^{k}\Bigl(\frac{x\sqrt{x^2+4}}{2}\Bigr)^{n-k-1}\bigl(\rho(x)-(-1)^{n-k}\sigma(x)\bigr)T_{2k}(x)\nonumber\\ 
	=\sum_{k=0}^{n}{n\choose k} \Bigl(\frac{x(x^2+2)\sqrt{x^2-1}}{2x^2-1}\Bigr)^{n-k} \bigl(1+(-1)^{n-k}\bigr)F_{2k+1}(x),\nonumber\\[2pt]
	\sum_{k=0}^{n}{n\choose k} \Bigl(\frac{x^2+2}{4x^2-2}\Bigr)^{k-1}\Bigl(\frac{x\sqrt{x^2+4}}{2}\Bigr)^{n-k-1}\bigl(\rho(x)-(-1)^{n-k}\sigma(x)\bigr)U_{2k}(x)\nonumber\\ 
	=2\sum_{k=0}^{n}{n\choose k} \Bigl(\frac{x(x^2+2)\sqrt{x^2-1}}{2x^2-1}\Bigr)^{n-k-1} \bigl(\alpha(x)-(-1)^{n-k}\beta(x)\bigr)F_{2k+1}(x).\nonumber\\\notag
	\end{gather*}
\end{theorem}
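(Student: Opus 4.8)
The plan is to follow the same template used in the proofs of the preceding theorems. For each of the twelve identities we choose one exponential generating function from the Chebyshev list $\tau,\tau_1,\tau_2,\omega,\omega_1,\omega_2$ and one from the Fibonacci list $\phi,\phi_1,\phi_2$, rescale the variable $z$ in one of them so that the exponential prefactors cancel while the hyperbolic arguments line up, read off the resulting functional equation, and then expand both sides as power series and compare coefficients of $z^n/n!$.

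To illustrate, take the first identity, which ties $T_{2k}(x)$ to $F_k(x)$. From $\tau_2(z,x)=e^{(2x^2-1)z}\cosh\!\bigl(2x\sqrt{x^2-1}\,z\bigr)$ in \eqref{egf-t2} and the observation that replacing $z$ by $\frac{4x^2-2}{x}z$ in \eqref{egf-F} turns the exponential factor there into $e^{(2x^2-1)z}$, one gets
$$\phi\!\left(\frac{4x^2-2}{x}\,z,x\right)=\frac{2\,e^{(2x^2-1)z}}{\sqrt{x^2+4}}\,\sinh\!\left(\frac{\sqrt{x^2+4}\,(2x^2-1)}{x}\,z\right).$$
Eliminating $e^{(2x^2-1)z}$ between this and the closed form for $\tau_2$ gives the functional equation
$$2\sinh\!\left(\frac{\sqrt{x^2+4}\,(2x^2-1)}{x}\,z\right)\tau_2(z,x)=\sqrt{x^2+4}\,\cosh\!\bigl(2x\sqrt{x^2-1}\,z\bigr)\,\phi\!\left(\frac{4x^2-2}{x}\,z,x\right).$$
Substituting the elementary expansions $2\sinh(az)=\sum_{m\geq0}a^m\bigl(1-(-1)^m\bigr)\frac{z^m}{m!}$, $\cosh(bz)=\sum_{m\geq0}\frac{b^m(1+(-1)^m)}{2}\frac{z^m}{m!}$, $\tau_2(z,x)=\sum_{k\geq0}T_{2k}(x)\frac{z^k}{k!}$ and $\phi(cz,x)=\sum_{k\geq0}c^kF_k(x)\frac{z^k}{k!}$, forming the Cauchy products, and comparing coefficients of $z^n/n!$ produces the identity: the $k=n$ term on the left and the $k=0$ term on the right vanish (the latter since $F_0(x)=0$), which permits cancelling a common factor $\frac{\sqrt{x^2+4}\,(2x^2-1)}{x}$ and accounts for the exponents $n-k-1$ and $k-1$. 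Every remaining identity is obtained in precisely the same way; only the pair of generating functions and the rescaling factor change ($\tau_1$ or $\omega_1$ with $\phi$, $\tau_2$ or $\omega_2$ with $\phi_1$, $\tau$ or $\omega$ with $\phi_1$ or $\phi_2$, $\tau_1$ or $\omega_1$ with $\phi_2$), and the rescaling factor is always determined by matching the coefficient of $z$ in the two exponential prefactors.

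For the identities with $U$-polynomials and with odd-indexed $T$- and $U$-polynomials one cannot use a bare $\sinh$ or $\cosh$, since the closed forms \eqref{egf-t1}, \eqref{egf-u}, \eqref{egf-u1} and \eqref{egf-F1} involve combinations of the type $x\cosh(w)+\sqrt{x^2-1}\,\sinh(w)$ (or the analogue with $\sqrt{x^2+4}$). Such a combination is rewritten as $\tfrac12\bigl(\alpha(x)e^{w}+\beta(x)e^{-w}\bigr)$, and likewise with $\rho(x),\sigma(x)$; this is exactly the origin of the factors $\alpha(x)\pm(-1)^{n-k}\beta(x)$, $\alpha^2(x)-(-1)^{n-k}\beta^2(x)$ and $\rho(x)-(-1)^{n-k}\sigma(x)$ appearing in the statement. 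After this rewriting, everything proceeds as before: cross-multiply for a functional equation, expand, and equate coefficients of $z^n/n!$.

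The entire computation is mechanical. The one real obstacle is organizational: for each of the twelve identities one must find the correct rescaling of $z$ so that the exponential prefactors cancel and the hyperbolic arguments come out in exactly the displayed shape, which in each case boils down to solving a single linear equation for the scaling constant. Once that is settled, the remaining steps are routine bookkeeping with the Cauchy product and the expansions of $e^{az}$, $\cosh(az)$ and $\sinh(az)$, and so we omit the details.
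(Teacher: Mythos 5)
Your proposal is correct and follows essentially the same method the paper uses throughout Section 4 (and tacitly intends for this final theorem, which it states without detailed proof): rescale the argument of one exponential generating function so the exponential prefactors cancel, rewrite the $\sinh/\cosh$ combinations via $\alpha,\beta$ (resp.\ $\rho,\sigma$), obtain the functional equation, and compare coefficients of $z^n/n!$ after forming Cauchy products. Your worked example with $\tau_2(z,x)$ and $\phi\bigl(\frac{4x^2-2}{x}z,x\bigr)$ checks out, including the division by the common factor $\frac{\sqrt{x^2+4}\,(2x^2-1)}{x}$ that produces the exponents $n-k-1$ and $k-1$.
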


\section*{5.~Concluding comments: Fibonacci and Lucas identities implied by Chebyshev-Fibonacci identities}

The polynomial relations derived in this paper imply many Fibonacci and Lucas identities, 
some of which are certainly known but some of which could turn out to be new. 
These identities come from the various links between Chebyshev polynomials and Fibonacci (Lucas) numbers. 
In \cite{Cas} and \cite{Siyi} many such links are listed. 
Among the various connections we have
\begin{gather}
T_n\Bigl(\frac{3}{2}\Bigr) = \frac{1}{2}L_{2n}, \qquad U_n\Bigl(\frac{3}{2}\Bigr) = F_{2n+2}, \label{rel1}\\[3pt]
T_n\Bigl(\frac{i}{2}\Bigr) = \frac{i^n}{2}L_{n}, \qquad U_n\Bigl(\frac{i}{2}\Bigr) = i^nF_{n+1}, \label{rel2} \notag
\end{gather}
\begin{gather}
T_{2n}\Bigl(\frac{\sqrt{5}}{2}\Bigr) = \frac{1}{2}L_{2n}, \qquad U_{2n}\Bigl(\frac{\sqrt{5}}{2}\Bigr) = L_{2n+1}, \label{rel3}\notag\\[3pt]
T_{2n+1}\Bigl(\frac{\sqrt{5}}{2}\Bigr) = \frac{\sqrt{5}}{2}F_{2n+1}, \qquad U_{2n+1}\Bigl(\frac{\sqrt{5}}{2}\Bigr) = \sqrt{5} F_{2n+2}. 
\label{rel4}\notag
\end{gather}

Using \eqref{rel1}, for instance, from Theorems \ref{Theo1}--\ref{Theo3}, we can immediately obtain new families of Fibonacci and Lucas identities. In the next statement, we state some examples.
\begin{corollary} 
	For $n\geq1$, we have the following identities:
	\begin{gather*}
	15F_{2(2n-1)}=5\cdot4^n-20\cdot4^{-n}+11\sum_{k=1}^{n-1}\bigl(4^{k}-4^{-k}\bigr)F_{2(2n-2k-1)},\\
	15F_{4n}=12\bigl(4^n-4^{-n}\bigr)+11\sum_{k=1}^{n-1}\bigl(4^{k}-4^{-k}\bigr)
	F_{4(n-k)},\\
	15F_{2n}=4\bigl(4^n-4^{-n}\bigr)-5\sum_{k=1}^{n-1}\bigl(4^{k}-4^{-k}\bigr)F_{2(n-k)},\\
	15L_{4(n-1)}=4^n+104\cdot4^{-n}+11\sum_{k=1}^{n-1}\bigl(4^{k}-4^{-k}\bigr)L_{4(n-k-1)},\\
	5L_{2(2n-1)}=9\cdot4^n+36\cdot4^{-n}+11\sum_{k=1}^{n-1}\bigl(4^{k}-4^{-k}\bigr)
	L_{2(2n-2k-1)},\\
	3L_{2n-2}=4^n+8\cdot4^{-n}-\sum_{k=0}^{n-1}\bigl(4^{k}-4^{-k}\bigr)L_{2(n-k-1)}.
	\end{gather*}
\end{corollary}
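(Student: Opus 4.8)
The plan is to obtain all six identities as specializations to $x=3/2$ of relations already established in Theorems~\ref{Theo1}--\ref{Theo3}, combined with the evaluations in \eqref{rel1} and an explicit value of the Fibonacci polynomials at $x=3/2$.

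First I would assemble the data needed at $x=3/2$. Since $(3/2)^2+4=25/4$, the numbers $\rho(x)$ and $\sigma(x)$ specialize to $\rho(3/2)=2$ and $\sigma(3/2)=-1/2$, so the Binet-type formula for $F_n(x)$ yields
\begin{equation*}
F_n\Bigl(\tfrac32\Bigr)=\frac{2^n-(-1/2)^n}{5/2}=\frac25\bigl(2^n-(-1)^n2^{-n}\bigr).
\end{equation*}
In particular $F_{2n}(3/2)=\tfrac25\bigl(4^n-4^{-n}\bigr)$, equivalently $4^k-4^{-k}=\tfrac52F_{2k}(3/2)$, and $F_{2n-1}(3/2)=\tfrac15 4^n+\tfrac45 4^{-n}$. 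From \eqref{rel1} we also use $T_n(3/2)=\tfrac12 L_{2n}$ and $U_{m-1}(3/2)=F_{2m}$, and the coefficient polynomials occurring in Theorems~\ref{Theo1}--\ref{Theo3} collapse to the constants $3x^2-4=\tfrac{11}{4}$, $\ x^2-2x+2=\tfrac54$, $\ 2x^2-1=\tfrac72$, $\ x=\tfrac32$, $\ x^2=\tfrac94$ at $x=3/2$. With these substitutions each such relation becomes a rational linear combination of Fibonacci and Lucas numbers and of powers $4^{\pm j}$, and multiplying through by a common denominator produces an integer identity.

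Next I would match each target identity with its source and carry out the substitution. The first identity comes from $F_{2n}(x)+F_{2n-2}(x)=xU_{2n-2}(x)-(3x^2-4)\sum_{k=1}^{n-1}F_{2k}(x)U_{2(n-k-1)}(x)$ in Theorem~\ref{Theo2}; the second from $2xF_{2n}(x)=xU_{2n-1}(x)-(3x^2-4)\sum_{k=1}^{n-1}U_{2(n-k)-1}(x)F_{2k}(x)$, the companion relation to \eqref{f2-u1}; the third from the last relation $F_{2n}(x)=xU_{n-1}(x)+(x^2-2x+2)\sum_{k=1}^{n-1}U_{n-1-k}(x)F_{2k}(x)$ of Theorem~\ref{Theo3}; the fourth from $F_{2n}(x)-(2x^2-1)F_{2n-2}(x)=xT_{2n-2}(x)-(3x^2-4)\sum_{k=1}^{n-1}F_{2k}(x)T_{2(n-k-1)}(x)$ in Theorem~\ref{Theo2}; the fifth from \eqref{f2-u1} itself, $x^2F_{2n-1}(x)=xT_{2n-1}(x)-(3x^2-4)\sum_{k=1}^{n-1}T_{2(n-k)-1}(x)F_{2k}(x)$; and the sixth from $F_{2n}(x)-xF_{2n-2}(x)=xT_{n-1}(x)+(x^2-2x+2)\sum_{k=1}^{n-1}T_{n-1-k}(x)F_{2k}(x)$ in Theorem~\ref{Theo3}, where the vanishing $k=0$ summand ($4^0-4^0=0$) may be adjoined so that the sum starts at $k=0$. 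In every case one replaces each $T$ and $U$ by its Lucas or Fibonacci value through \eqref{rel1}, replaces each $F_{2k}(x)$ inside the sum by $\tfrac25(4^k-4^{-k})$, simplifies the remaining geometric part by means of $F_{2n}(3/2)\pm F_{2n-2}(3/2)$ --- which contract via $4^n+4^{n-1}=\tfrac54 4^n$ and $4^{-n}+4^{-(n-1)}=5\cdot4^{-n}$, and analogously for the minus sign --- or via $F_{2n-1}(3/2)=\tfrac15 4^n+\tfrac45 4^{-n}$, and finally clears fractions by multiplying by the common denominator ($10$, $20$, or $4$).

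I do not expect a genuine obstacle here: the mathematical content lies entirely in Theorems~\ref{Theo1}--\ref{Theo3}, and what remains is careful bookkeeping. The two points needing attention are the index shifts inside the sums --- for instance $U_{2(n-k-1)}(3/2)=F_{4(n-k-1)+2}=F_{2(2n-2k-1)}$ and $T_{2(n-k)-1}(3/2)=\tfrac12 L_{2(2n-2k-1)}$, which must agree with the indices displayed on the right-hand sides --- and the rational arithmetic when clearing denominators; checking the identities at $n=1$ and $n=2$ is a quick way to confirm the work and, in particular, to pin down the overall constant multiplying each left-hand side.
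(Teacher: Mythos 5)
Your proposal is correct and is exactly the derivation the paper intends: specialize the relevant identities of Theorems~\ref{Theo2} and \ref{Theo3} at $x=3/2$, use $T_n(3/2)=\tfrac12 L_{2n}$, $U_n(3/2)=F_{2n+2}$ and $F_n(3/2)=\tfrac25\bigl(2^n-(-1)^n2^{-n}\bigr)$, and clear denominators; your matching of each target identity to its polynomial source and your index bookkeeping all check out. One remark: your derivation yields $15L_{2(2n-1)}$ on the left-hand side of the fifth identity rather than the printed $5L_{2(2n-1)}$, and the $n=1$ check you propose ($15L_2=45=9\cdot4+36\cdot4^{-1}$) confirms that $15$ is the correct coefficient, so the paper's statement contains a typo there.
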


To give another example, observe that from
\begin{equation*}
T_n(-\sqrt{5}) = \left.
\begin{cases}
\frac{1}{2} L_{3n}, & \mbox{$n$ even,} \\
-\frac{\sqrt{5}}{2} F_{3n}, & \mbox{$n$ odd},
\end{cases}
\right. \qquad
U_n(-\sqrt{5}) = \left.
\begin{cases}
\frac{1}{4} L_{3n+3}, & \mbox{$n$ even,} \\
-\frac{\sqrt{5}}{4} F_{3n+3}, & \mbox{$n$ odd},
\end{cases}
\right.
\end{equation*}
and
\begin{displaymath}
F_n(-\sqrt{5}) = \left.
\begin{cases}
-\frac{\sqrt{5}}{3} F_{2n}, & \mbox{$n$ even,} \\
\frac{1}{3} L_{2n}, & \mbox{$n$ odd},
\end{cases}
\right. 
\end{displaymath}
from Theorem \ref{Theo2} we get the next summation identities.
\begin{corollary} 
	Let $n\geq 0$. Then
	\begin{gather*}
	11 \sum_{k=1}^{n} F_{4k} L_{6(n-k)} = 3 L_{6n} - 2F_{4n+4} + 18 F_{4n}, \\
	11 \sum_{k=1}^{n} F_{4k} L_{6(n-k)+3} = 3 L_{6n+3} - 4F_{4n+4} - 4 F_{4n}, 
	\end{gather*}
	\begin{gather*}
	11 \sum_{k=0}^{n} L_{4k+2} F_{6(n-k)+3} = 3 (F_{6n+9} - F_{6n+3}) - 2(L_{4n+6}-L_{4n+2}), \\
	11 \sum_{k=0}^{n} L_{4k+2} F_{6(n-k)+6} = 3 (F_{6n+12} - F_{6n+6}) - 8L_{4n+6}.
	\end{gather*}
\end{corollary}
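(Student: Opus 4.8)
The plan is to take the four identities of Theorem~\ref{Theo2}, shift the index $n\mapsto n+1$ so that the sums run from $0$ (or $1$) to $n$, and then specialize at $x=-\sqrt5$, converting each polynomial evaluation into Fibonacci and Lucas numbers through the three piecewise substitution rules displayed just before the corollary. The only arithmetic needed at the outset is that at $x=-\sqrt5$ one has $3x^2-4=11$, $2x^2-1=9$, and $x=-\sqrt5$, which already accounts for the coefficients $11$ and $9$ and for the single overall factor of $\sqrt5$ that will enter and then cancel.

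The observation that keeps the specialization clean is a parity alignment. After the shift $n\mapsto n+1$, in the two \emph{even}-indexed identities of Theorem~\ref{Theo2} every Chebyshev and Fibonacci polynomial carries an even subscript, while in the two \emph{odd}-indexed identities every subscript is odd; hence in each of the four identities the case distinctions in the formulas for $T_m(-\sqrt5)$, $U_m(-\sqrt5)$, $F_m(-\sqrt5)$ collapse to a single branch, and no mixing of cases occurs. For instance, the odd-indexed first-kind identity of Theorem~\ref{Theo2}, after $n\mapsto n+1$, reads
$$x\bigl(F_{2n+3}(x)-F_{2n+1}(x)\bigr)=T_{2n+3}(x)-T_{2n+1}(x)-(3x^2-4)\sum_{k=0}^{n}F_{2k+1}(x)T_{2(n-k)+1}(x);$$
I would substitute $F_{2m+1}(-\sqrt5)=\tfrac13 L_{4m+2}$ and $T_{2m+1}(-\sqrt5)=-\tfrac{\sqrt5}{2}F_{6m+3}$, divide through by $\sqrt5$, and clear the denominators $2$ and $3$ to obtain exactly the third identity of the corollary. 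The fourth identity comes the same way from the odd-indexed second-kind identity, now using $U_{2m+1}(-\sqrt5)=-\tfrac{\sqrt5}{4}F_{6m+6}$.

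The first two identities of the corollary are produced identically from the even-indexed identities of Theorem~\ref{Theo2}, this time via $F_{2m}(-\sqrt5)=-\tfrac{\sqrt5}{3}F_{4m}$, $T_{2m}(-\sqrt5)=\tfrac12 L_{6m}$, and $U_{2m}(-\sqrt5)=\tfrac14 L_{6m+3}$; on the left of the first-kind identity the coefficient $-(2x^2-1)=-9$ will combine with $F_{2m}(-\sqrt5)=-\tfrac{\sqrt5}{3}F_{4m}$ to give the $+18F_{4n}$ term, and once more the $\sqrt5$'s cancel. I do not expect a genuine obstacle: each of the four cases is a finite, branch-free substitution, so no induction is needed. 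The only thing that will require care is the bookkeeping --- tracking the shifted summation ranges, keeping the sign of the lone $-\sqrt5$ distinct from the even powers of $x$, and checking that the denominators $2$, $3$, $4$ together with the factor $\sqrt5$ leave precisely the integer coefficients $3$, $2$, $8$, $11$ that appear in the statement.
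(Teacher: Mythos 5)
Your proposal is correct and is exactly the derivation the paper intends: shift $n\mapsto n+1$ in the four identities of Theorem~\ref{Theo2}, set $x=-\sqrt5$ so that $3x^2-4=11$ and $2x^2-1=9$, apply the single relevant branch of each substitution rule (the parity alignment you note is exactly why no branch mixing occurs), and clear the factor $\sqrt5$ and the denominators $2$, $3$, $4$. I verified the arithmetic in all four cases, including the $-9\cdot(-\tfrac{\sqrt5}{3})F_{4n}$ source of the $+18F_{4n}$ term, and everything matches the stated coefficients.
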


Finally, with $F_n(4)=F_{3n}/2$, we get from Theorem \ref{Theo1} the following. 
\begin{corollary} 
	Let $n\geq 1$. Then
	\begin{gather*}
	F_{3n} = 2T_{n-1}(4) - \sum_{k=1}^{n-2} \bigl(4T_{n-1-k}(4)-2T_{n-2-k}(4)\bigr)F_{3k}, \\
	F_{3n} + 4 F_{3n-3} = 2U_{n-1}(4) -\sum_{k=1}^{n-2}\bigl(4U_{n-1-k}(4)-2U_{n-2-k}(4)\bigr)F_{3k},
	\end{gather*}
	with
	\begin{equation*}
	T_{n}(4) = 4^n \sum_{j=0}^{\lfloor{n/2}\rfloor} {n \choose 2j}\Big (\frac{15}{16}\Big )^j \quad \mbox{and} \quad 
	U_{n}(4) = 4^n \sum_{j=0}^{\lfloor{n/2}\rfloor} {n+1 \choose 2j+1}\Big (\frac{15}{16} \Big )^j.
	\end{equation*}
\end{corollary}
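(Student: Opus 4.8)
The plan is to obtain this corollary as a direct specialization of Theorem~\ref{Theo1} at $x=4$, so the whole task splits into three routine verifications: the evaluation $F_n(4)=\tfrac12 F_{3n}$ (stated in the text preceding the corollary), the substitution itself, and the two closed forms for $T_n(4)$ and $U_n(4)$. No functional-equation machinery is needed here; the two identities of Theorem~\ref{Theo1} already do all the work once the parameter is fixed.

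First I would establish $F_n(4)=\tfrac12 F_{3n}$, using the Binet-type formula $F_n(x)=\bigl(\rho^n(x)-\sigma^n(x)\bigr)/\sqrt{x^2+4}$ from the introduction. At $x=4$ one has $\sqrt{x^2+4}=2\sqrt5$, $\rho(4)=2+\sqrt5$, and $\sigma(4)=2-\sqrt5$. Writing $\varphi=\tfrac{1+\sqrt5}{2}$ and $\psi=\tfrac{1-\sqrt5}{2}$ for the golden ratio and its conjugate, the key observation is that $\varphi^3=2+\sqrt5$ and $\psi^3=2-\sqrt5$, which one checks at once from $\varphi^2=\tfrac{3+\sqrt5}{2}$. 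Hence $\rho(4)^n=\varphi^{3n}$ and $\sigma(4)^n=\psi^{3n}$, so $F_n(4)=\bigl(\varphi^{3n}-\psi^{3n}\bigr)/(2\sqrt5)=\tfrac12 F_{3n}$ by the classical Binet formula $F_m=(\varphi^m-\psi^m)/\sqrt5$.

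Next I would substitute $x=4$ into the two identities of Theorem~\ref{Theo1}. In the first identity, replacing each $F_m(x)$ by $F_m(4)=\tfrac12 F_{3m}$ (for $m=n$ and $m=k$) and setting $x=4$ in the Chebyshev coefficients gives
\[
\tfrac12 F_{3n}=T_{n-1}(4)-\sum_{k=1}^{n-2}\bigl(4T_{n-1-k}(4)-2T_{n-2-k}(4)\bigr)\tfrac12 F_{3k};
\]
multiplying through by $2$ yields the first claimed identity. The second identity is handled identically, using in addition $F_{n-1}(4)=\tfrac12 F_{3n-3}$, so that the term $xF_{n-1}(x)$ becomes $4\cdot\tfrac12 F_{3n-3}=2F_{3n-3}$; after clearing the common factor $\tfrac12$ the left-hand side becomes $F_{3n}+4F_{3n-3}$, exactly as stated.

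Finally, the explicit evaluations follow from the hypergeometric-free closed forms for $T_n(x)$ and $U_n(x)$ in the introduction. Setting $x=4$ gives $x^2-1=15$ and $x^{n-2k}=4^{n-2k}$, so in $T_n(4)=\sum_{k}\binom{n}{2k}15^k4^{n-2k}$ one factors out $4^n$ and uses $15^k/4^{2k}=(15/16)^k$, which is precisely the stated formula; the formula for $U_n(4)$ is obtained the same way from its closed form with $\binom{n+1}{2k+1}$ in place of $\binom{n}{2k}$. None of these steps presents a genuine obstacle; the only point requiring even a small computation is the cube identity $\varphi^3=2+\sqrt5$ that underpins $F_n(4)=\tfrac12 F_{3n}$, and that is immediate.
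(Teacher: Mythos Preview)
Your proposal is correct and follows exactly the approach the paper indicates: the paper simply remarks that ``with $F_n(4)=F_{3n}/2$, we get from Theorem~\ref{Theo1} the following,'' and you have filled in the routine details of that specialization (including a verification of $F_n(4)=\tfrac12 F_{3n}$ via $\varphi^3=2+\sqrt5$ and of the closed forms for $T_n(4)$, $U_n(4)$ from the explicit sums in the introduction). There is nothing to add.
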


More experiments in this direction are left for a personal study. 

\vspace{0,4cm}

\bigskip
\hrule
\bigskip

\noindent 2010 {\it Mathematics Subject Classification}: 11B37, 11B39.

\bigskip
\noindent \emph{Keywords:} Chebyshev polynomials,  Fibonacci polynomials, Fibonacci numbers,  Lucas numbers, balancing polynomials,  generating function.

\end{document}